\begin{document}
\title[\hfilneg positive solutions of conformable fractional differential equations  \hfil ]
{Existence of positive solutions for a class of conformable fractional differential equations with integral boundary conditions and a parameter}

\author[F. Haddouchi]
{Faouzi Haddouchi}

\address{Faouzi Haddouchi \newline
Department of Physics, University of Sciences and Technology of
Oran-MB \newline
El Mnaouar, BP 1505, 31000 Oran, Algeria
\newline
Laboratory of Fundamental and Applied Mathematics of Oran,\newline
Department of Mathematics, University of Oran 1 Ahmed Benbella,\newline
31000 Oran,
Algeria}
\email{fhaddouchi@gmail.com}

\subjclass[2010]{34A08, 34B18, 35J05}
\keywords{Conformable fractional derivatives, integral boundary value problems,
positive solutions, fixed point theorems}

\begin{abstract}
In this paper, we study the existence of positive solutions for a class of conformable fractional differential equations with integral boundary conditions. By using the properties of the Green's function and the fixed point theorem in a cone, we obtain some existence results of positive solution. we also provide some examples to illustrate our results.
\end{abstract}

\maketitle
\numberwithin{equation}{section}
\newtheorem{theorem}{Theorem}[section]
\newtheorem{corollary}[theorem]{Corollary}
\newtheorem{lemma}[theorem]{Lemma}
\newtheorem{remark}[theorem]{Remark}
\newtheorem{definition}[theorem]{Definition}
\newtheorem{example}[theorem]{Example}

\allowdisplaybreaks
\section{Introduction}
Fractional calculus and fractional differential equations are experiencing a rapid development. There are several concepts of fractional derivatives, some classical, such as Riemann-Liouville or Caputo definitions, and some novel, such as
conformable fractional derivative \cite{Khalil}, $\beta$-derivative \cite{Atangana}, or a new definition \cite{Caputo,Losada}.
Recently, the new conformable fractional derivative definition given by  \cite{Khalil,Abdeljawad,Horani} has drawn much interest from many researchers \cite{Yang,Zhao,Zhou,Anderson1,Anderson2,Weberszpil,Katugampola}.
Recent results on conformable fractional differential equations can also be seen in \cite{Bayour,Al-Rifae,Asawasamrit}

In 2017, X. Dong et al.\cite{Dong} studied the existence and multiplicity of positive solutions for the following conformable fractional differential equation with $p$-Laplacian operator
\begin{equation*}
D^{\alpha}(\phi_{p}(D^{\alpha}u(t)))=f(t,u(t)), \ 0<t<1,
\end{equation*}
\begin{equation*}
u(0)=u(1)=D^{\alpha}u(0)= D^{\alpha}u(1)=0,
\end{equation*}
where $1<\alpha\leq 2$ is a real number, $D^{\alpha}$ is the conformable fractional derivative, $\phi_{p}(s)=|s|^{p-2}s$, $p>1$, $\phi_{p}^{-1}=\phi_{q}$, ${1/p}+{1/q}=1$, and $f:[0,1]\times[0,+\infty)\rightarrow [0,+\infty$ is continuous. By the use of an approximation method and fixed point theorems on cone, some existence results are established.

In \cite{Batarfi}, the authors considered the following three-point boundary value problem for conformable fractional differential equation

\begin{equation*}
D^{\alpha}(D+\lambda)x(t)=f(t,x(t)), \ t \in [0,1],
\end{equation*}
\begin{equation*}
x(0)=0, \  x^{\prime}(0) = 0, \ x(1) =\beta x(\eta),
\end{equation*}
where $D^{\alpha}$ is the conformable fractional derivative of order $\alpha\in(1,2]$, $D$ is the ordinary derivative, $f:[0,1]\times \mathbb{R}\rightarrow \mathbb{R}$ is a known continuous function, $\lambda$ and $\beta$ are real numbers, $\lambda>0$, and $\eta\in(0,1)$.
The existence results  are obtained by means of Krasnoselskii's fixed
point theorem and the classical Banach fixed point theorem.

In \cite{Anderson3},  D. R. Anderson et al., considered the following conformable fractional-order boundary value problem with Sturm-Liouville boundary conditions

\begin{equation*}
-D^{\beta}D^{\alpha}x(t)=f(t,x(t)), \ 0\leq t\leq1,
\end{equation*}
\begin{equation*}
\gamma x(0)-\delta D^{\alpha}x(0)=0=\eta x(1)+\zeta D^{\alpha}x(1),
\end{equation*}
where $\alpha, \beta \in (0,1]$ and the derivatives are conformable fractional derivatives, with $\gamma,\delta, \eta, \zeta \geq0$ and $d=\eta\delta+\gamma\zeta+\gamma\eta/\alpha>0$.
By employing a functional compression expansion fixed point theorem due to Avery, Henderson, and O’Regan, they proved the existence of positive solution.

In a recent paper \cite{He}, by using the well-known topological transversality
theorem, L. He et al., obtained the existence of solutions for fractional differential
equation

\begin{equation*}
D^{\alpha}x(t)=f(t,x(t),D^{\alpha-1}x(t)), \ t \in [0,1],
\end{equation*}
with one of the following boundary value conditions
\begin{equation*}
x(0)=A, \ D^{\alpha-1}x(1)=B; \ \text{or} \ D^{\alpha-1}x(0)=A, \ x(1)=B,
\end{equation*}
where $\alpha\in(1,2]$ is a real number, $D^{\alpha}x(t)$ is the conformable fractional order derivative of a function $x(t)$, and $f:[0,1]\times \mathbb{R}^{2}\rightarrow \mathbb{R}$ is a continuous function. The existence results of solutions to the problem are obtained under $f$ which satisfies some barrier strip conditions.

In the same year, Q. Song et al. \cite{Song} investigated the following fractional Dirichlet boundary value problem

\begin{equation*}
D^{\alpha}x(t)=f(t,x(t),D^{\alpha-1}x(t)), \ t \in [0,1],
\end{equation*}
\begin{equation*}
x(0)=A, \ x(1)=B,
\end{equation*}
where $1<\alpha\leq 2$, $D^{\alpha}x(t)$ is the conformable fractional derivative, and $f:[0,1]\times \mathbb{R}^{2}\rightarrow \mathbb{R}$ is a continuous function. The existence results of solutions to the problem are obtained under $f$ satisfying some sign conditions.

Very recently, in 2018, W. Zhong and L. Wang \cite{Zhong} discussed the existence of positive solutions of the conformable fractional differential equation
\begin{equation*}
D^{\alpha}x(t)+f(t,x(t))=0, \ t \in [0,1],
\end{equation*}
subject to the boundary conditions
\begin{equation*}
x(0)=0, \ x(1)=\lambda \int_{0}^{1}x(t)dt,
\end{equation*}
where the order $\alpha$ belongs to $(1,2]$, $D^{\alpha}x(t)$ denotes the conformable fractional derivative of a function $x(t)$ of order $\alpha$, and $f:[0,1]\times [0,\infty)\rightarrow [0,\infty)$ is a continuous function. By employing a fixed point theorem in a cone, they established
some criteria for the existence of at least one positive solution.

Inspired and motivated by the above recent works, we intend in the present paper
to study the existence of positive solutions to boundary value problem of conformable fractional differential equation

\begin{equation}\label{eq1}
D^{\alpha}x(t) + f(t, x(t)) = 0, \ t \in [0,1],
\end{equation}
\begin{equation}\label{eq2}
x(0) =0, \ x(1) =\lambda \int_{0}^{\eta}x(t)dt,
\end{equation}
where $D^{\alpha}x(t)$ denotes the conformable fractional derivative of $x$ at $t$ of order $\alpha$, $\alpha\in(1,2]$, $\eta\in(0, 1]$, $f\in C([0, 1]\times[0,\infty),[0,\infty))$ is a continuous function, and the parameter $\lambda$ is a positive constant.

For the case of $\eta=1$, problem \eqref{eq1} and \eqref{eq2} reduces to the problem studied by Zhong and Wang in \cite{Zhong}. Our approach is similar to that used in \cite{Zhong}, i.e., fixed point theorem in a cone, lower and upper bounds for the Green’s function are employed as the main tool of analysis. It should noticed that our results seem more natural than those in \cite{Zhong}, and in this case,
the results in \cite{Zhong} are special cases of those in this paper. Our work extends and complements the results in \cite{Zhong}. It is worth pointing out that the obtained Green's function in this work is singular at $s=0$.

The rest of this paper is arranged as follows:\\
In Section 2, we present the necessary definitions
and we give some lemmas in order to prove our main results. In particular, we state some properties of the Green's function associated with BVP \eqref{eq1} and \eqref{eq2}. In Section 3, some sufficient conditions are established for the existence of positive solution to our BVP when $f$ is superlinear or sublinear. Finally, two examples are also included to illustrate the main results.\\
\section{Preliminaries and lemmas}

In this section, we preliminarily give some definitions and results concerning conformable
fractional derivative. These results can be found in the recent literature, see \cite{Khalil,Abdeljawad,Dong}.

\begin{definition} \label{def 2.1}\emph{(\cite{Khalil,Abdeljawad})}
Let $\alpha \in (n,n+1]$ and $f$ be a $n$-differentiable function at $t>0$, then the
fractional conformable derivative of order $\alpha$ at $t>0$ is given by

\begin{equation*}
D^{\alpha}f(t)=\lim_{\epsilon\rightarrow 0}\frac{f^{(n)}(t+\epsilon t^{n+1-\alpha})-f^{(n)}(t)}{\epsilon},
\end{equation*}
provided the limits of the right side exists.
\end{definition}
If $f$ is $\alpha$-order differentiable on $(0, a)$, $a > 0$, and $\lim_{t\rightarrow 0^{+}}D^{\alpha}f(t)$ exists, then define
\[ D^{\alpha}f(0)=\lim_{t\rightarrow 0^{+}}D^{\alpha}f(t).\]

\begin{definition}  \label{def 2.2}\emph{(\cite{Khalil,Abdeljawad})}
Let $\alpha \in (n,n+1]$ and set $\beta=\alpha-n$. Then, the fractional derivative of a function
$f : [0,\infty)\rightarrow \mathbb{R}$ of order $\alpha$, where $f^{(n)}(t)$ exists, is defined by
\[D^{\alpha}f(t)=D^{\beta}f^{(n)}(t).\]
\end{definition}

\begin{lemma}  \label{lem 2.1}\emph{(\cite{Khalil,Dong})}
Let $\alpha \in (n,n+1]$ and $t>0$. The function $f(t)$ is $(n+1)$-differentiable if and only if
$f$ is $\alpha$-differentiable, moreover, $D^{\alpha}f(t)=t^{n+1-\alpha}f^{(n+1)}(t)$.
\end{lemma}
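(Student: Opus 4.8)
The plan is to reduce everything to the base case $\alpha \in (0,1]$ (i.e.\ $n=0$) and then exploit a single change of variables in the defining limit. Set $\beta = \alpha - n$, so that $\beta \in (0,1]$, and write $g = f^{(n)}$. By Definition \ref{def 2.2} the asserted formula $D^{\alpha}f(t) = t^{n+1-\alpha}f^{(n+1)}(t)$ is equivalent to $D^{\beta}g(t) = t^{1-\beta}g'(t)$, because $n+1-\alpha = 1-\beta$ and $(f^{(n)})' = f^{(n+1)}$. Thus it suffices to establish, for $\beta \in (0,1]$ and $t > 0$, that $g$ is differentiable at $t$ if and only if $g$ is $\beta$-differentiable at $t$, and that in this case $D^{\beta}g(t) = t^{1-\beta}g'(t)$.

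First I would write out the conformable derivative from Definition \ref{def 2.1} in the base case $n=0$, namely
\[
D^{\beta}g(t) = \lim_{\epsilon \to 0}\frac{g\bigl(t + \epsilon t^{1-\beta}\bigr) - g(t)}{\epsilon}.
\]
The key step is the substitution $h = \epsilon\, t^{1-\beta}$. Since $t > 0$ is fixed, the factor $t^{1-\beta}$ is a fixed positive constant, so $h \to 0$ if and only if $\epsilon \to 0$, and the correspondence $\epsilon \mapsto h$ is a bijection on a punctured neighborhood of $0$. Writing $\epsilon = h\, t^{\beta-1}$ and substituting, I obtain
\[
D^{\beta}g(t) = \lim_{h \to 0} t^{1-\beta}\,\frac{g(t + h) - g(t)}{h} = t^{1-\beta}\lim_{h \to 0}\frac{g(t+h) - g(t)}{h}.
\]
Because $t^{1-\beta}$ is a nonzero constant, the limit on the left exists precisely when the ordinary difference quotient on the right converges, i.e.\ precisely when $g'(t)$ exists; and when it does, $D^{\beta}g(t) = t^{1-\beta}g'(t)$. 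This settles both the equivalence and the formula in the base case.

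Finally I would lift the base case back to general $\alpha$. By definition, $f$ being $(n+1)$-differentiable at $t$ means exactly that $g = f^{(n)}$ is differentiable at $t$, which by the base case is equivalent to $g$ being $\beta$-differentiable, i.e.\ to $f$ being $\alpha$-differentiable via Definition \ref{def 2.2}. Substituting $g = f^{(n)}$ and $1 - \beta = n + 1 - \alpha$ into the base-case identity then yields $D^{\alpha}f(t) = t^{n+1-\alpha}f^{(n+1)}(t)$, as required. I do not anticipate any serious obstacle here; the only point demanding care is verifying that $h = \epsilon\, t^{1-\beta}$ is a genuine bijection near $0$, so that existence of the one-sided limits transfers in both directions, and this holds simply because $t$ is a fixed positive number.
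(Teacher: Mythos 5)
Your proof is correct, and it coincides with the argument behind this lemma in the paper's sources: the paper itself states the result without proof, citing \cite{Khalil,Dong}, where the proof is exactly your reduction to $\beta=\alpha-n\in(0,1]$ with $g=f^{(n)}$ followed by the change of variables $h=\epsilon\,t^{1-\beta}$ in the defining limit, the fixed positive factor $t^{1-\beta}$ guaranteeing that existence of the limit transfers in both directions. Nothing further is needed.
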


\begin{definition}  \label{def 2.3}\emph{(\cite{Abdeljawad})}
Let $\alpha$ be in $(n, n+1]$. The fractional integral of a function $f: [0,\infty)\rightarrow \mathbb{R}$ of order $\alpha$ is defined by
\begin{equation*}
I^{\alpha}f(t)=\frac{1}{n!}\int_{0}^{t}(t-s)^{n}s^{\alpha-n-1}f(s)ds.
\end{equation*}
\end{definition}

\begin{lemma} \label{lem 2.2}\emph{(\cite{Khalil,Abdeljawad,Dong})}
Let $\alpha$ be in $(n, n + 1]$. If $f$ is a continuous function on $[0,\infty)$, then,
for all $t > 0$, $D^{\alpha}I^{\alpha}f(t)=f(t)$.
\end{lemma}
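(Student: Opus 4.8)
The plan is to reduce the conformable derivative to an ordinary one via Lemma~\ref{lem 2.1}, and then to recognize the fractional integral as an iterated ordinary integral whose $(n+1)$-th derivative is transparent. Concretely, set $g(t)=I^{\alpha}f(t)=\frac{1}{n!}\int_{0}^{t}(t-s)^{n}h(s)\,ds$, where $h(s)=s^{\alpha-n-1}f(s)$. Since $\alpha-n-1\in(-1,0]$ and $f$ is continuous, $h$ is continuous on $(0,\infty)$ and the integral converges near $s=0$, so $g$ is well defined for $t>0$. By Lemma~\ref{lem 2.1}, $D^{\alpha}g(t)=t^{n+1-\alpha}g^{(n+1)}(t)$, so it suffices to show $g^{(n+1)}(t)=t^{\alpha-n-1}f(t)=h(t)$; the two powers of $t$ then cancel and leave $f(t)$.

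First I would differentiate $g$ repeatedly under the integral sign by the Leibniz rule. The key observation is that at each step the boundary contribution from the upper limit $s=t$ carries a factor $(t-t)^{k}=0$ as long as the exponent $k\geq 1$, so only the term coming from differentiating the integrand survives. An easy induction then yields, for $0\le k\le n$,
\begin{equation*}
g^{(k)}(t)=\frac{1}{(n-k)!}\int_{0}^{t}(t-s)^{n-k}h(s)\,ds,
\end{equation*}
and in particular $g^{(n)}(t)=\int_{0}^{t}h(s)\,ds$. Differentiating once more, the fundamental theorem of calculus (valid because $h$ is continuous on $(0,\infty)$) gives $g^{(n+1)}(t)=h(t)=t^{\alpha-n-1}f(t)$ for every $t>0$. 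Substituting into the formula from Lemma~\ref{lem 2.1} produces $D^{\alpha}I^{\alpha}f(t)=t^{n+1-\alpha}\,t^{\alpha-n-1}f(t)=f(t)$, as claimed.

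The main obstacle I anticipate is justifying the repeated differentiation under the integral sign in the presence of the singular weight $s^{\alpha-n-1}$ near $s=0$. One has to confirm that each intermediate integral $\int_{0}^{t}(t-s)^{n-k}s^{\alpha-n-1}f(s)\,ds$ remains convergent — which it does, precisely because $\alpha-n-1>-1$ — and that the Leibniz rule applies on compact subintervals of $(0,\infty)$, so that the vanishing of each boundary term genuinely holds at the intermediate stages where the exponent of $(t-s)$ is positive. Once this bookkeeping is in place the computation is routine, and the cancellation of the powers of $t$ is exactly the mechanism that makes $I^{\alpha}$ a right inverse of $D^{\alpha}$.
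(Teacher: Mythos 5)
Your proof is correct. The paper does not actually prove this lemma --- it quotes it from \cite{Khalil,Abdeljawad,Dong} --- and your argument (use Lemma \ref{lem 2.1} to write $D^{\alpha}I^{\alpha}f(t)=t^{n+1-\alpha}\frac{d^{n+1}}{dt^{n+1}}I^{\alpha}f(t)$, compute the $(n+1)$-st ordinary derivative of the iterated integral by Leibniz rule and the fundamental theorem of calculus, and cancel the powers of $t$) is precisely the standard proof found in those references, including the correct handling of the integrable singularity $s^{\alpha-n-1}$ at $s=0$.
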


\begin{lemma} \label{lem 2.3}\emph{(\cite{Dong})}
Let $\alpha \in (n, n + 1]$, $f$ be a $\alpha$-differentiable function at $t > 0$, then $D^{\alpha}f(t)=0$ for $t\in (0,\infty)$ if and only if $f (t) = a_{0} + a_{1}t +...+ a_{n-1}t^{n-1}+ a_{n}t^{n}$, where $a_{k}\in \mathbb{R}$, for $k =0,1,...,n$.
\end{lemma}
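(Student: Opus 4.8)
The plan is to reduce everything to the classical statement about ordinary derivatives by invoking Lemma~\ref{lem 2.1}, which for $\alpha\in(n,n+1]$ and $t>0$ gives the identification $D^{\alpha}f(t)=t^{n+1-\alpha}f^{(n+1)}(t)$. The key observation is that the factor $t^{n+1-\alpha}$ is strictly positive on $(0,\infty)$ (note $n+1-\alpha\in[0,1)$, so the exponent is nonnegative and the factor never vanishes for $t>0$), and can therefore be divided out. This converts the vanishing of the conformable derivative into the vanishing of an honest $(n+1)$-st derivative, so that the whole lemma becomes a restatement of the elementary calculus fact that a sufficiently smooth function on an interval has identically zero top derivative precisely when it is a polynomial of the appropriate degree.

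For the necessity direction, I would start from the hypothesis $D^{\alpha}f(t)=0$ for all $t\in(0,\infty)$. Since $f$ is assumed $\alpha$-differentiable, Lemma~\ref{lem 2.1} applies and yields $t^{n+1-\alpha}f^{(n+1)}(t)=0$; because $t^{n+1-\alpha}>0$ for every $t>0$, this forces $f^{(n+1)}(t)=0$ on $(0,\infty)$. I would then integrate $n+1$ times, introducing one real constant of integration at each step, to conclude that $f(t)=a_{0}+a_{1}t+\cdots+a_{n}t^{n}$ with $a_{k}\in\mathbb{R}$.

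For the sufficiency direction, I would simply take $f(t)=a_{0}+a_{1}t+\cdots+a_{n}t^{n}$, observe that such a polynomial has degree at most $n$ so that $f^{(n+1)}(t)\equiv 0$, and then apply Lemma~\ref{lem 2.1} in the reverse direction to obtain $D^{\alpha}f(t)=t^{n+1-\alpha}f^{(n+1)}(t)=0$ for all $t\in(0,\infty)$.

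There is no serious obstacle here; the only point requiring minor care is the necessity direction, where one must justify that a function whose $(n+1)$-st derivative vanishes identically on $(0,\infty)$ is genuinely a polynomial of degree at most $n$. This is exactly the repeated-integration argument, and the resulting integration constants are precisely the coefficients $a_{k}$. One should also keep track of the hypotheses, noting that Lemma~\ref{lem 2.1} guarantees $f$ is $(n+1)$-differentiable precisely because it is $\alpha$-differentiable, so the classical calculus reasoning is legitimate on the stated domain.
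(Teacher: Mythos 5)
Your proof is correct. Note, however, that the paper does not prove this lemma at all: it is stated as a known preliminary imported from \cite{Dong}, so there is no in-paper argument to compare yours against. Your route --- using Lemma~\ref{lem 2.1} to write $D^{\alpha}f(t)=t^{n+1-\alpha}f^{(n+1)}(t)$, dividing out the factor $t^{n+1-\alpha}$, which is strictly positive for $t>0$ since $n+1-\alpha\in[0,1)$, and then invoking the classical fact that a function whose $(n+1)$-st derivative vanishes identically on $(0,\infty)$ is a polynomial of degree at most $n$ --- is the standard and essentially unavoidable one, and is presumably how the cited source argues. The hypotheses line up exactly as you say: $\alpha$-differentiability at every $t>0$ yields, via Lemma~\ref{lem 2.1}, $(n+1)$-differentiability at every $t>0$, and the repeated-integration step (each application of the mean value theorem turning a vanishing derivative into a constant) is legitimate precisely because $(0,\infty)$ is an interval. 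The converse direction is likewise immediate from Lemma~\ref{lem 2.1} applied to a polynomial of degree at most $n$.
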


\begin{lemma} \label{lem 2.4}\emph{(\cite{Abdeljawad,Dong})}
Let $\alpha$ be in $(n, n + 1]$. If $D^{\alpha}f(t)$ is continuous on $[0,\infty)$, then
$I^{\alpha}D^{\alpha}f(t) = f (t) + c_{0} + c_{1}t +...+ c_{n}t^{n}$
for some real numbers $c_{k}$ , $k = 0,1,...,n$.
\end{lemma}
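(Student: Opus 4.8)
The plan is to reduce the identity to the single statement that $I^{\alpha}D^{\alpha}f-f$ is annihilated by $D^{\alpha}$, and then to read off the polynomial form from the kernel characterization in Lemma~\ref{lem 2.3}. The hypothesis that $D^{\alpha}f$ is continuous on $[0,\infty)$ guarantees that $f$ is $\alpha$-differentiable, hence $(n+1)$-differentiable on $(0,\infty)$ by Lemma~\ref{lem 2.1}; this is what places every function below in the differentiability class required by the lemmas I intend to invoke.

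First I would set $g(t)=I^{\alpha}D^{\alpha}f(t)-f(t)$. Because $D^{\alpha}f$ is continuous, Lemma~\ref{lem 2.2} applies with the continuous function $D^{\alpha}f$ playing the role of $f$ there, giving $D^{\alpha}\big(I^{\alpha}(D^{\alpha}f)\big)(t)=D^{\alpha}f(t)$ for all $t>0$; in particular $I^{\alpha}D^{\alpha}f$ is itself $\alpha$-differentiable. Since $D^{\alpha}$ is linear (immediate from the representation $D^{\alpha}h(t)=t^{n+1-\alpha}h^{(n+1)}(t)$ of Lemma~\ref{lem 2.1}, as ordinary differentiation and multiplication by $t^{n+1-\alpha}$ are both linear), applying $D^{\alpha}$ to $g$ gives
\begin{equation*}
D^{\alpha}g(t)=D^{\alpha}\big(I^{\alpha}(D^{\alpha}f)\big)(t)-D^{\alpha}f(t)=D^{\alpha}f(t)-D^{\alpha}f(t)=0,\qquad t>0.
\end{equation*}

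Next, $g$ is $\alpha$-differentiable as the difference of two $\alpha$-differentiable functions and satisfies $D^{\alpha}g\equiv0$ on $(0,\infty)$, so Lemma~\ref{lem 2.3} forces $g$ to be a polynomial of degree at most $n$, say $g(t)=c_{0}+c_{1}t+\dots+c_{n}t^{n}$ with $c_{k}\in\mathbb{R}$. Unwinding the definition of $g$ then yields exactly $I^{\alpha}D^{\alpha}f(t)=f(t)+c_{0}+c_{1}t+\dots+c_{n}t^{n}$, which is the claim.

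I expect the only delicate point to be the bookkeeping of differentiability classes, namely checking that $I^{\alpha}D^{\alpha}f$ and $f$ are genuinely $\alpha$-differentiable so that Lemmas~\ref{lem 2.2} and~\ref{lem 2.3} and the linearity of $D^{\alpha}$ are all legitimately invoked. As an alternative that also pins down the constants explicitly, one can substitute $D^{\alpha}f(s)=s^{n+1-\alpha}f^{(n+1)}(s)$ from Lemma~\ref{lem 2.1} into Definition~\ref{def 2.3}; the factors $s^{\alpha-n-1}$ and $s^{n+1-\alpha}$ cancel, reducing $I^{\alpha}D^{\alpha}f(t)$ to the Taylor integral remainder $\frac{1}{n!}\int_{0}^{t}(t-s)^{n}f^{(n+1)}(s)\,ds$, whence $c_{k}=-f^{(k)}(0)/k!$. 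The care needed in this second route is that, when $\alpha<n+1$, the factor $s^{\alpha-n-1}$ is only integrable, not bounded, near $s=0$, so one must verify convergence of the integral before appealing to Taylor's formula; the first approach avoids this issue entirely.
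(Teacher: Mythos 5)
Your proposal is correct, but note that the paper itself never proves this statement: Lemma \ref{lem 2.4} is quoted from the references \cite{Abdeljawad,Dong}, so there is no in-paper argument to compare against line by line. Your main route --- setting $g=I^{\alpha}D^{\alpha}f-f$, using Lemma \ref{lem 2.2} with the continuous function $D^{\alpha}f$ in place of $f$ to get $D^{\alpha}I^{\alpha}(D^{\alpha}f)=D^{\alpha}f$, invoking linearity of $D^{\alpha}$ to conclude $D^{\alpha}g\equiv 0$ on $(0,\infty)$, and then reading off from the kernel characterization of Lemma \ref{lem 2.3} that $g$ is a polynomial of degree at most $n$ --- is sound, and it has the merit of being entirely self-contained within the lemmas the paper has already stated; the only hypotheses you need to track ($\alpha$-differentiability of $f$, of $I^{\alpha}D^{\alpha}f$, and hence of $g$) are exactly the ones you verify. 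Your alternative route, cancelling $s^{\alpha-n-1}$ against $s^{n+1-\alpha}$ to reduce $I^{\alpha}D^{\alpha}f(t)$ to the Taylor remainder $\frac{1}{n!}\int_{0}^{t}(t-s)^{n}f^{(n+1)}(s)\,ds$, is essentially the proof given in the cited literature, and it buys more: it identifies the constants as $c_{k}=-f^{(k)}(0)/k!$, which is what makes the lemma usable in the paper's proof of Lemma \ref{lem 2.5} (where the boundary conditions are imposed to pin down $c_{0}$ and $c_{1}$). You are also right about the one delicate point of that second route: for $\alpha<n+1$ the factor $s^{\alpha-n-1}$ is unbounded near $s=0$, so the Taylor formula must be justified in the improper/absolutely-continuous form (which works, since $\alpha-n-1>-1$ makes $f^{(n+1)}(s)=s^{\alpha-n-1}D^{\alpha}f(s)$ integrable near $0$); your first route sidesteps this entirely.
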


In order to study boundary value problem \eqref{eq1}-\eqref{eq2}, we consider first the linear equation

\begin{equation}\label{eq3}
D^{\alpha}x(t)+h(t) = 0, \ t \in [0,1],
\end{equation}
where $\alpha\in(1,2]$ and $h\in C([0,1])$.

\begin{lemma}\label{lem 2.5}
If $\lambda{\eta^{2}}\neq2$, then the unique solution of \eqref{eq3} subject to the boundary
conditions \eqref{eq2} is given by
$$ x(t) = \int_{0}^{1}\mathcal{K}(t,s) h(s) ds,$$
where
\begin{equation}\label{eq4}
\mathcal{K}(t,s)=\mathcal{G}(t,s)+\frac{\lambda{t}}{2-\lambda{\eta^{2}}}\mathcal{H}(\eta,s),
\end{equation}

\begin{equation}\label{eq5}
\mathcal{G}(t,s)=\begin{cases}(1-t)s^{\alpha-1}, & 0 \leq s \leq t \leq 1; \\
t(1-s)s^{\alpha-2}, & 0 \leq t \leq s \leq 1,
\end{cases}
\end{equation}
and
\begin{equation}\label{eq6}
\mathcal{H}(t,s)=\begin{cases}(2t-t^{2}-s)s^{\alpha-1}, & 0 \leq s \leq t \leq 1; \\
t^{2}(1-s)s^{\alpha-2}, & 0\leq t \leq s \leq 1.
\end{cases}
\end{equation}
\end{lemma}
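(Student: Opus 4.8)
The plan is to reduce the boundary value problem to an integral equation by inverting the conformable derivative, then pin down the two integration constants from the boundary conditions. Since $\alpha\in(1,2]$ corresponds to $n=1$ in Definition \ref{def 2.3} and Lemma \ref{lem 2.4}, applying the operator $I^{\alpha}$ to \eqref{eq3} and invoking Lemma \ref{lem 2.4} gives
$$x(t) = -I^{\alpha}h(t) + c_0 + c_1 t = -\int_0^t (t-s)s^{\alpha-2}h(s)\,ds + c_0 + c_1 t,$$
where I use Definition \ref{def 2.3} with $n=1$ to write $I^{\alpha}h(t)=\int_0^t(t-s)s^{\alpha-2}h(s)\,ds$. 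The condition $x(0)=0$ forces $c_0=0$ at once, since the integral vanishes at $t=0$.

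Next I would impose the nonlocal condition. Evaluating at $t=1$ gives $x(1)=-\int_0^1(1-s)s^{\alpha-2}h(s)\,ds+c_1$. To compute $\int_0^{\eta}x(t)\,dt$ I would interchange the order of integration in the double integral over the triangle $\{0\le s\le t\le\eta\}$, using $\int_s^{\eta}(t-s)\,dt=\tfrac12(\eta-s)^2$, to obtain
$$\int_0^{\eta}x(t)\,dt=-\frac12\int_0^{\eta}(\eta-s)^2 s^{\alpha-2}h(s)\,ds+c_1\frac{\eta^2}{2}.$$
Substituting both expressions into $x(1)=\lambda\int_0^{\eta}x(t)\,dt$ yields a single linear equation for $c_1$ whose coefficient is $1-\tfrac{\lambda\eta^2}{2}=\tfrac{2-\lambda\eta^2}{2}$. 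This is precisely where the hypothesis $\lambda\eta^2\neq2$ enters: it guarantees invertibility and hence a unique
$$c_1=\frac{2}{2-\lambda\eta^2}\int_0^1(1-s)s^{\alpha-2}h(s)\,ds-\frac{\lambda}{2-\lambda\eta^2}\int_0^{\eta}(\eta-s)^2 s^{\alpha-2}h(s)\,ds.$$

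The final step is to substitute this $c_1$ back and recognize the kernel. I would first isolate the part of the solution associated with the local problem $x(0)=x(1)=0$: splitting $\int_0^1$ as $\int_0^t+\int_t^1$ and combining with $-\int_0^t(t-s)s^{\alpha-2}h(s)\,ds$ shows that $-\int_0^t(t-s)s^{\alpha-2}h(s)\,ds+t\int_0^1(1-s)s^{\alpha-2}h(s)\,ds=\int_0^1\mathcal{G}(t,s)h(s)\,ds$ with $\mathcal{G}$ as in \eqref{eq5}. Writing $A=\int_0^1(1-s)s^{\alpha-2}h(s)\,ds$, the remaining terms collapse to $(c_1-A)t$, with
$$c_1-A=\frac{\lambda}{2-\lambda\eta^2}\Big[\eta^2\int_0^1(1-s)s^{\alpha-2}h(s)\,ds-\int_0^{\eta}(\eta-s)^2 s^{\alpha-2}h(s)\,ds\Big].$$
To finish I would check that the bracketed quantity equals $\int_0^1\mathcal{H}(\eta,s)h(s)\,ds$, which on $0\le s\le\eta$ reduces to the pointwise identity $\eta^2(1-s)-(\eta-s)^2=s\,(2\eta-\eta^2-s)$, matching the two branches of \eqref{eq6}; on $\eta\le s\le1$ the match is immediate. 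Combining the two pieces produces $x(t)=\int_0^1\mathcal{K}(t,s)h(s)\,ds$ with $\mathcal{K}$ as in \eqref{eq4}.

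I expect the main obstacle to be bookkeeping rather than conceptual: correctly swapping the order of integration and then matching the explicit piecewise kernels through the algebraic identity above. One should also observe that although $s^{\alpha-2}$ is singular at $s=0$ when $\alpha<2$, the factor $(1-s)s^{\alpha-2}$, and likewise the integrands appearing in $\mathcal{G}$ and $\mathcal{H}$, remain integrable on $[0,1]$ because $\alpha-2>-1$; thus every integral above is well defined, which is the precise sense in which the Green's function is singular at $s=0$.
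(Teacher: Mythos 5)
Your proposal is correct and follows essentially the same route as the paper: invert via Lemma \ref{lem 2.4}, use $x(0)=0$ to kill $c_{0}$, interchange the order of integration to evaluate $\int_{0}^{\eta}x(t)\,dt$, exploit $\lambda\eta^{2}\neq 2$ to solve the resulting linear equation, and match the kernels through the identity $\eta^{2}(1-s)-(\eta-s)^{2}=s(2\eta-\eta^{2}-s)$. The only cosmetic difference is that you solve for the constant $c_{1}$ while the paper solves for $x(1)$ (these differ by $I^{\alpha}h(1)$), and your closing remark on the integrability of $s^{\alpha-2}$ near $s=0$ is a welcome detail the paper leaves implicit.
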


\begin{proof}
From Lemma \ref{lem 2.4}, we may reduce \eqref{eq3} to an equivalent integral equation,
\begin{gather*}
\begin{aligned}
x(t)& =-I_{\alpha}h(t)+c_{0}+c_{1}t \\
&  =-\int_{0}^{t}(t-s)s^{\alpha-2}h(s)ds+c_{0}+c_{1}t,
\end{aligned}
\end{gather*}
for some $c_{0}, c_{1}\in \mathbb{R}$.
By \eqref{eq2}, we get $c_{0}=0$ and $c_{1}=I_{\alpha}h(1)+x(1)$.
Hence
\begin{eqnarray*}
x(t)& =&-I_{\alpha}h(t)+tI_{\alpha}h(1)+tx(1) \\
& =&-\int_{0}^{t}(t-s)s^{\alpha-2}h(s)ds+t\int_{0}^{1}(1-s)s^{\alpha-2}h(s)ds+tx(1)\\
&=&-\int_{0}^{t}(t-s)s^{\alpha-2}h(s)ds+t\int_{0}^{t}(1-s)s^{\alpha-2}h(s)ds\\
&&+t\int_{t}^{1}(1-s)s^{\alpha-2}h(s)ds+tx(1)\\
&=&\int_{0}^{t}(1-t)s^{\alpha-1}h(s)ds+\int_{t}^{1}t(1-s)s^{\alpha-2}h(s)ds+tx(1).
\end{eqnarray*}
So
\begin{equation} \label{eq7}
x(t)= \int_{0} ^{1} \mathcal{G}(t,s)h(s) ds + tx(1).
\end{equation}

Moreover, in checking the second boundary condition, we get
\begin{eqnarray*}
x(1)& =&{\lambda}\int_{0}^{\eta}x(t)dt\\
& =&{\lambda}\int_{0}^{\eta}\big[-I_{\alpha}h(t)+tI_{\alpha}h(1)+tx(1)\big]dt \\
& =&-{\lambda}\int_{0}^{\eta}\Bigg(\int_{0}^{t}(t-s)s^{\alpha-2}h(s)ds\Bigg)dt+\frac{\lambda \eta^{2}}{2}I_{\alpha}h(1)+\frac{\lambda \eta^{2}}{2}x(1)\\
& =&-\frac{\lambda}{2}\int_{0}^{\eta}(\eta-s)^{2}s^{\alpha-2}h(s)ds+\frac{\lambda \eta^{2}}{2}I_{\alpha}h(1)+\frac{\lambda \eta^{2}}{2}x(1),
\end{eqnarray*}
which implies
\begin{equation*}
 x(1) =-\frac{\lambda}{2-\lambda{\eta^{2}}} \int_{0}^{\eta} (\eta-s)^{2}s^{\alpha-2}h(s)ds+\frac{\lambda{\eta^{2}}}{2-\lambda{\eta^{2}}}I_{\alpha}h(1).
\end{equation*}
Substituting the value of $x(1)$ in \eqref{eq7}, we get
\begin{eqnarray*}
x(t)& =&\int_{0} ^{1} \mathcal{G}(t,s)h(s) ds-\frac{\lambda{t}}{2-\lambda{\eta^{2}}} \int_{0}^{\eta} (\eta-s)^{2}s^{\alpha-2}h(s)ds\\
&&+\frac{\lambda{\eta^{2}}t}{2-\lambda{\eta^{2}}}\int_{0}^{1}(1-s)s^{\alpha-2}h(s)ds\\
& =& \int_{0} ^{1}  \mathcal{G}(t,s)h(s) ds-\frac{\lambda{t}}{2-\lambda{\eta^{2}}} \int_{0}^{\eta} (\eta-s)^{2}s^{\alpha-2}h(s)ds\\
&&+\frac{\lambda{\eta^{2}}t}{2-\lambda{\eta^{2}}}\int_{0}^{\eta}(1-s)s^{\alpha-2}h(s)ds+
\frac{\lambda{\eta^{2}}t}{2-\lambda{\eta^{2}}}\int_{\eta}^{1}(1-s)s^{\alpha-2}h(s)ds\\
& =&\int_{0} ^{1}  \mathcal{G}(t,s)h(s) ds+\frac{\lambda{t}}{2-\lambda{\eta^{2}}} \int_{0}^{\eta}s^{\alpha-2}\Big[\eta^{2}(1-s)-(\eta-s)^{2}\Big]h(s)ds\\
&&+\frac{\lambda{t}}{2-\lambda{\eta^{2}}}\int_{\eta}^{1}\eta^{2}(1-s)s^{\alpha-2}h(s)ds\\
& =&\int_{0} ^{1}  \mathcal{G}(t,s)h(s) ds+\frac{\lambda{t}}{2-\lambda{\eta^{2}}} \int_{0}^{\eta}s^{\alpha-1}(2\eta-{\eta}^{2}-s)h(s)ds\\
&&+\frac{\lambda{t}}{2-\lambda{\eta^{2}}}\int_{\eta}^{1}\eta^{2}(1-s)s^{\alpha-2}h(s)ds\\
& =&\int_{0} ^{1}  \mathcal{G}(t,s)h(s) ds+\frac{\lambda{t}}{2-\lambda{\eta^{2}}}\int_{0}^{1} \mathcal{H}(\eta,s)h(s)ds.
\end{eqnarray*}
The proof is therefore complete.
\end{proof}
We point out here that \eqref{eq5}-\eqref{eq6} become the usual Green’s function when $\alpha=2$.
\begin{lemma}\label{lem 2.6}
Let $\theta\in (0,\frac{1}{2})$ be fixed.
For $ \mathcal{G}(t,s)$ and $ \mathcal{H}(t,s)$ given in \eqref{eq5}-\eqref{eq6}, we have the following bounds.

\begin{itemize}
\item[(i)] $\theta^{2} \mathcal{G}(s,s) \leq  \mathcal{G}(t,s) \leq  \mathcal{G}(s,s),$  \ for all \  $(t,s) \in [\theta , 1- \theta] \times (0,1];$
\item[(ii)] $\rho(t) \mathcal{G}(s,s) \leq  \mathcal{H}(t,s) \leq  \mathcal{G}(s,s),$  \ for all \  $(t,s) \in (0,1] \times (0,1],$
where $ \mathcal{G}(s,s)=(1-s)s^{\alpha-1},$ and
\begin{equation*}
\rho(t)=\min\big\{t^{2}, t(1-t)\big\}=\begin{cases} t^{2},& t\leq \frac{1}{2};  \\
t(1-t), & t \geq \frac{1}{2}.
\end{cases}
\end{equation*}
\item[(iii)] $\theta^{2}\mathcal{G}(s,s) \leq \mathcal{H}(t,s) \leq \mathcal{G}(s,s),$  \ for all \  $(t,s) \in [\theta , 1- \theta] \times (0,1],$
  \end{itemize}
\end{lemma}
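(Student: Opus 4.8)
The plan is to prove all three bounds by elementary case analysis, exploiting the piecewise definitions \eqref{eq5}--\eqref{eq6} together with the identity $\mathcal{G}(s,s)=(1-s)s^{\alpha-1}$, obtained by setting $t=s$ in either branch of \eqref{eq5}. Throughout, the common factors $s^{\alpha-1}$ and $s^{\alpha-2}$ are positive on $(0,1]$ (since $\alpha\in(1,2]$), so every inequality reduces, after dividing out the appropriate factor, to a polynomial inequality in $t$ and $s$.

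For part (i) I would treat the two regions $0\le s\le t$ and $t\le s\le 1$ separately. In the first region $\mathcal{G}(t,s)=(1-t)s^{\alpha-1}$, and dividing by $s^{\alpha-1}$ the upper bound is just $1-t\le 1-s$ (as $s\le t$), while the lower bound $1-t\ge\theta^2(1-s)$ follows from $1-t\ge\theta$ (because $t\le 1-\theta$) together with $\theta^2(1-s)\le\theta^2\le\theta$. In the second region $\mathcal{G}(t,s)=t(1-s)s^{\alpha-2}$, so after dividing by $(1-s)s^{\alpha-2}$ the upper bound is $t\le s$ and the lower bound $t\ge\theta^2 s$ follows from $t\ge\theta\ge\theta^2\ge\theta^2 s$.

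For part (ii) the upper bounds are again immediate: in the region $s\le t$ one has $2t-t^2-s\le 1-s$ because this is equivalent to $(1-t)^2\ge 0$, and in the region $t\le s$ one has $t^2\le s$ because $t^2\le t\le s$. The real work is the lower bound $\rho(t)\mathcal{G}(s,s)\le\mathcal{H}(t,s)$ in the region $0\le s\le t$, where the $\min$ in the definition of $\rho(t)$ forces a further split at $t=\tfrac12$. When $t\le\tfrac12$ (so $\rho(t)=t^2$) I would rearrange $2t-t^2-s-t^2(1-s)\ge0$ into the factored form $(1-t)\bigl[2t-s(1+t)\bigr]\ge0$ and note $s\le t\le \tfrac{2t}{1+t}$; when $t\ge\tfrac12$ (so $\rho(t)=t(1-t)$) the analogous rearrangement gives $t-s(1-t+t^2)\ge0$, which holds since $s\le t\le\tfrac{t}{1-t+t^2}$. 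The remaining region $t\le s$ of part (ii) is easy: dividing out reduces it to $t^2\ge\rho(t)s$, clear from $\rho(t)\le t^2$ and $s\le1$. I expect these two factorizations to be the main obstacle, since one must discover the correct grouping and check the sign of each factor on the relevant range.

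Finally, part (iii) I would deduce directly from part (ii) once I show $\rho(t)\ge\theta^2$ for $t\in[\theta,1-\theta]$: for $t\le\tfrac12$ this is $t^2\ge\theta^2$, and for $t\ge\tfrac12$ the function $t(1-t)$ is decreasing, so its minimum over $[\tfrac12,1-\theta]$ is attained at $t=1-\theta$ and equals $(1-\theta)\theta\ge\theta^2$ because $1-\theta\ge\theta$. Substituting $\rho(t)\ge\theta^2$ into the lower bound of (ii) yields (iii), completing the proof.
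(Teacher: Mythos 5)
Your proof is correct, but it diverges from the paper's argument at two points, both in the direction of being more self-contained and more computational. For part (i) the paper does essentially no work: it quotes Lemma 2.5 of \cite{Zhong} (the bound $t(1-t)\mathcal{G}(s,s)\le\mathcal{G}(t,s)\le\mathcal{G}(s,s)$ on $(0,1]\times(0,1]$) and then restricts $t$ to $[\theta,1-\theta]$, where $t(1-t)\ge\theta(1-\theta)\ge\theta^{2}$; your two-region verification proves the same bound from scratch, which costs a few lines but removes the external citation. For part (ii), in the region $s\le t$ the paper avoids your case split at $t=\tfrac12$ entirely via the single grouping $\mathcal{H}(t,s)=\bigl[t(1-t)+(t-s)\bigr]s^{\alpha-1}\ge t(1-t)s^{\alpha-1}\ge t(1-t)(1-s)s^{\alpha-1}$; combined with the bound $\mathcal{H}(t,s)\ge t^{2}(1-s)s^{\alpha-1}$ in the region $t\le s$, the minimum $\rho(t)=\min\{t^{2},t(1-t)\}$ then appears automatically when the two $s$-regions are merged, with no factorization to discover. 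Your factored forms $(1-t)\bigl[2t-s(1+t)\bigr]$ and $t-s(1-t+t^{2})$ are algebraically correct and do the job (indeed your first one proves the slightly different bound $\mathcal{H}\ge t^{2}\mathcal{G}(s,s)$ on $s\le t$, which the paper never needs), but they are exactly the ``main obstacle'' you anticipated, and the paper's decomposition $2t-t^{2}-s=t(1-t)+(t-s)$ dissolves it. Part (iii) is handled identically in both: the paper declares it immediate from (ii), and your explicit check that $\rho(t)\ge\theta^{2}$ on $[\theta,1-\theta]$ is precisely the detail being suppressed.
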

\begin{proof}
{\rm (i)} From Lemma (2.5) in \cite{Zhong}, we have
\[t(1-t) \mathcal{G}(s,s) \leq  \mathcal{G}(t,s) \leq  \mathcal{G}(s,s),  \ \forall \ (t,s)\in (0,1]\times (0,1].\]
Therefore if $\theta\in (0,\frac{1}{2})$, then $ \mathcal{G}(t,s)$ satisfies
\[\theta^{2} \mathcal{G}(s,s) \leq  \mathcal{G}(t,s) \leq  \mathcal{G}(s,s),  \ \  \forall (t,s) \in [\theta , 1- \theta] \times (0,1].\]

{\rm (ii)} If $ s \leq t $, then from \eqref{eq6} we have
\begin{equation}\label{eq8}
\begin{split}
  \mathcal{H}(t,s) &= (2t-t^{2}-s)s^{\alpha-1}\\
  & =\big[-(t^{2}-2t)-s\big]s^{\alpha-1}\\
   & =\big(-[(t-1)^{2}-1]-s\big)s^{\alpha-1}\\
    & =\big[(1-s)-(1-t)^{2}\big]s^{\alpha-1}\\
   & \leq (1-s)s^{\alpha-1}. \\
 \end{split}
 \end{equation}\\
 On the other hand, we have
 \begin{equation}\label{eq9}
 \begin{split}
  \mathcal{H}(t,s) &= (2t-t^{2}-s)s^{\alpha-1}\\
  \mathcal{H}(t,s) &= \big[t(1-t)+(t-s)\big]s^{\alpha-1}\\
   & \geq t(1-t)s^{\alpha-1}\\
    & \geq (1-s)s^{\alpha-1}t(1-t).\\
  \end{split}
  \end{equation}\\
If $ t \leq s $, from \eqref{eq6}, we have
   \begin{equation}\label{eq10}
   \begin{split}
   \mathcal{H}(t,s) &=  t^{2}(1-s)s^{\alpha-2}  \\
     & \leq  t(1-s)s^{\alpha-2} \\
     & =\frac{t}{s}(1-s)s^{\alpha-1} \\
     & \leq (1-s)s^{\alpha-1},\\
    \end{split}
    \end{equation}
    and,
   \begin{equation}\label{eq11}
      \begin{split}
       \mathcal{H}(t,s) &=  t^{2}(1-s)s^{\alpha-2}\\
        & \geq  t^{2}(1-s)s^{\alpha-2}s \\
        & =t^{2}(1-s)s^{\alpha-1}.
       \end{split}
       \end{equation}\\
From \eqref{eq8}, \eqref{eq9}, \eqref{eq10} and \eqref{eq11}, we have
\[ \rho (t)(1-s)s^{\alpha-1} \leq \mathcal{H}(t,s) \leq (1-s)s^{\alpha-1},\ for\  all \ (t,s) \in (0,1] \times (0,1].\]
{\rm (iii)} It follows immediately from {\rm (ii)}.
\end{proof}

\begin{lemma}\label{lem 2.7}
Let $ \theta \in (0, \frac{1}{2})$ be fixed and $0\leq\lambda<2/{\eta^{2}}$. If $ h(t) \in C ([0,1],[0,\infty ))$, then the unique solution of \eqref{eq3} subject to the boundary conditions \eqref{eq2} is nonnegative and satisfies
\[ \min_{t\in [\theta, 1-\theta]} x(t) \geq \theta^{2}\|x\|.\]
\end{lemma}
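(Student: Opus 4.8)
The plan is to work directly from the representation $x(t)=\int_{0}^{1}\mathcal{K}(t,s)h(s)\,ds$ furnished by Lemma~\ref{lem 2.5}, feeding in the pointwise bounds on $\mathcal{G}$ and $\mathcal{H}$ from Lemma~\ref{lem 2.6}. Since $0\le\lambda<2/\eta^{2}$ we have $2-\lambda\eta^{2}>0$, so the constant $c:=\dfrac{\lambda}{2-\lambda\eta^{2}}$ is nonnegative and, by \eqref{eq4},
\[
\mathcal{K}(t,s)=\mathcal{G}(t,s)+c\,t\,\mathcal{H}(\eta,s).
\]
Nonnegativity comes first and is immediate: $\mathcal{G}(s,s)=(1-s)s^{\alpha-1}\ge0$ on $(0,1]$, so Lemma~\ref{lem 2.6} gives $\mathcal{G}(t,s)\ge0$ and $\mathcal{H}(\eta,s)\ge0$; together with $c\ge0$, $t\ge0$ and $h\ge0$ this makes $\mathcal{K}(t,s)\ge0$, whence $x(t)\ge0$ for all $t\in[0,1]$ and $\|x\|=\max_{t\in[0,1]}x(t)$.

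The core of the argument is a single pointwise comparison of $\mathcal{K}$. For the upper bound I would use $\mathcal{G}(\tau,s)\le\mathcal{G}(s,s)$ and $\mathcal{H}(\eta,s)\le\mathcal{G}(s,s)$ from Lemma~\ref{lem 2.6} together with $\tau\le1$ to obtain, for every $\tau\in[0,1]$,
\[
\mathcal{K}(\tau,s)\le\mathcal{G}(s,s)+c\,\mathcal{H}(\eta,s).
\]
For the lower bound, fix $t\in[\theta,1-\theta]$: Lemma~\ref{lem 2.6}(i) gives $\mathcal{G}(t,s)\ge\theta^{2}\mathcal{G}(s,s)$, while $t\ge\theta$ and $c,\mathcal{H}\ge0$ give $c\,t\,\mathcal{H}(\eta,s)\ge c\,\theta\,\mathcal{H}(\eta,s)$. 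Since $0<\theta<1$ implies $\theta\ge\theta^{2}$, this last term is $\ge\theta^{2}c\,\mathcal{H}(\eta,s)$, and combining,
\[
\mathcal{K}(t,s)\ge\theta^{2}\big[\mathcal{G}(s,s)+c\,\mathcal{H}(\eta,s)\big]\ge\theta^{2}\,\mathcal{K}(\tau,s)\qquad\text{for all }\tau\in[0,1],\ s\in(0,1].
\]

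Finally I would integrate against $h\ge0$: taking the minimum over $t\in[\theta,1-\theta]$ on the left and the maximum over $\tau\in[0,1]$ on the right yields
\[
\min_{t\in[\theta,1-\theta]}x(t)\ge\theta^{2}\int_{0}^{1}\big[\mathcal{G}(s,s)+c\,\mathcal{H}(\eta,s)\big]h(s)\,ds\ge\theta^{2}\|x\|,
\]
which is exactly the assertion. The one delicate point, and where a naive estimate fails, is the second term of $\mathcal{K}$: because $\mathcal{H}(\eta,s)$ carries the fixed node $\eta$ rather than the running variable $t$, one must resist lower-bounding it via $\rho(\eta)\mathcal{G}(s,s)$, since the factor $\rho(\eta)$ need not dominate $\theta^{2}$. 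The trick is to leave $\mathcal{H}(\eta,s)$ untouched and confine all $t$-dependence to the prefactor $c\,t$, so that the whole estimate is powered by the two elementary inequalities $\theta\le t\le1$ and $\theta\ge\theta^{2}$.
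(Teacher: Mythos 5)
Your proposal is correct and follows essentially the same route as the paper: both rest on the bounds $\theta^{2}\mathcal{G}(s,s)\le\mathcal{G}(t,s)\le\mathcal{G}(s,s)$ from Lemma~\ref{lem 2.6}, keep $\mathcal{H}(\eta,s)$ untouched, and push all $t$-dependence into the prefactor $\frac{\lambda t}{2-\lambda\eta^{2}}$, squeezed between $\theta^{2}$ and $1$ (the paper via $t\ge t^{2}\ge\theta^{2}$, you via $t\ge\theta\ge\theta^{2}$). Your packaging as a single pointwise kernel comparison $\mathcal{K}(t,s)\ge\theta^{2}\mathcal{K}(\tau,s)$ rather than two integral estimates is only a cosmetic difference.
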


\begin{proof}
From Lemma \ref{lem 2.5} and Lemma \ref{lem 2.6}, $ x(t) $ is nonnegative for $ t \in [0,1] $, and we get
\begin{equation*}
\begin{split}
x(t) &= \int_{0}^{1} \mathcal{K}(t,s) h(s)ds \\
 &=\int_{0} ^{1} \mathcal{G}(t,s)h(s) ds+\frac{\lambda{t}}{2-\lambda{\eta^{2}}}\int_{0}^{1}\mathcal{H}(\eta,s)h(s)ds\\
&\leq \int_{0} ^{1} \mathcal{G}(s,s)h(s) ds+\frac{\lambda}{2-\lambda{\eta^{2}}}\int_{0}^{1}\mathcal{H}(\eta,s)h(s)ds.
\end{split}
\end{equation*}
Then
\begin{equation}\label{eq12}
\|x\| \leq \int_{0} ^{1} \mathcal{G}(s,s)h(s) ds+\frac{\lambda}{2-\lambda{\eta^{2}}}\int_{0}^{1}\mathcal{H}(\eta,s)h(s)ds .
\end{equation}

On the other hand, from Lemma \ref{lem 2.6} for any $ t\in [\theta, 1-\theta] $, we have
\begin{equation} \label{eq13}
\begin{split}
x(t)&= \int_{0} ^{1} \mathcal{G}(t,s)h(s) ds+\frac{\lambda{t}}{2-\lambda{\eta^{2}}}\int_{0}^{1}\mathcal{H}(\eta,s)h(s)ds\\
&\geq \theta^{2}\int_{0} ^{1} \mathcal{G}(s,s)h(s) ds+\frac{\lambda{t^{2}}}{2-\lambda{\eta^{2}}}\int_{0}^{1}\mathcal{H}(\eta,s)h(s)ds \\
&\geq \theta^{2}\int_{0} ^{1} \mathcal{G}(s,s)h(s) ds+\frac{\lambda{{\theta}^{2}}}{2-\lambda{\eta^{2}}}\int_{0}^{1}\mathcal{H}(\eta,s)h(s)ds \\
&=\theta^{2}\left[ \int_{0} ^{1} \mathcal{G}(s,s)h(s) ds+\frac{\lambda}{2-\lambda{\eta^{2}}}\int_{0}^{1}\mathcal{H}(\eta,s)h(s)ds\right]\\
&\geq \theta^{2}\|x\|.
\end{split}
\end{equation}
From \eqref{eq12} and \eqref{eq13}, we obtain $$\min_{t\in [\theta, 1-\theta]} x(t) \geq  \theta^{2}\|x\|.$$
\end{proof}

In order to prove our main results, the following well known fixed point theorems are needed in the forthcoming analysis \cite{Deimling,Lan,Amann}.

\begin{lemma}\label{lem 2.8}
Let $\mathcal{B}$ be a Banach space, and let $\mathcal{P}\subseteq \mathcal{B}$, be a cone, and $\Omega_{1}$, $\Omega_{2}$ two bounded open balls of $\mathcal{B}$ centered at the origin with $\overline{\Omega}_{1}\subset{\Omega_{2}}$. Assume that
$A:\mathcal{P}\cap (\overline{\Omega}_{2} \backslash \Omega_{1})\rightarrow \mathcal{P}$ is a completely
continuous operator such that
\begin{itemize}
\item[(C1)]
$\left\Vert Ax\right\Vert \leq \left\Vert x\right\Vert ,$ $x\in\mathcal{P}\cap
\partial \Omega _{1}$.
\item[(C2)]
There exists $ \varphi\in \mathcal{P}\backslash \{0\}$ such that $x\neq Ax+\lambda{\varphi}$ for $x\in\mathcal{P}\cap \partial \Omega _{2}$ and $\lambda>0$.
\end{itemize}
Then $A$ has a fixed point in $\mathcal{P}\cap (\overline{\Omega}_{2} \backslash \Omega_{1})$. The same conclusion remains valid if {\rm (C1)} holds on $\mathcal{P}\cap
\partial \Omega _{2}$ and {\rm (C2)} holds on  $\mathcal{P}\cap \partial \Omega _{1}$.
\end{lemma}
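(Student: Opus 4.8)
The plan is to derive this from the theory of the fixed point index $i(A,\mathcal{P}\cap\Omega,\mathcal{P})$ for completely continuous self-maps of the cone, taking as known its normalization, additivity, homotopy invariance, and solution properties. First I would make a harmless reduction: since $\overline{\Omega}_1\subset\Omega_2$, both $\partial\Omega_1$ and $\partial\Omega_2$ are contained in $\overline{\Omega}_2\setminus\Omega_1$, so if $A$ has a fixed point on $\mathcal{P}\cap(\partial\Omega_1\cup\partial\Omega_2)$ the conclusion holds trivially. Henceforth I assume $A$ is fixed-point-free on these two boundaries, which guarantees that the three indices appearing below are all well defined.

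Next I would compute $i(A,\mathcal{P}\cap\Omega_1,\mathcal{P})=1$ using (C1). Consider the homotopy $H(t,x)=tAx$ for $t\in[0,1]$, which maps $\mathcal{P}$ into itself. If $x=tAx$ for some $x\in\mathcal{P}\cap\partial\Omega_1$, then $\|x\|=t\|Ax\|\le t\|x\|$; for $t<1$ this forces $x=0\notin\partial\Omega_1$, while $t=1$ is excluded by the no-boundary-fixed-point reduction. Hence $x\neq H(t,x)$ on $\mathcal{P}\cap\partial\Omega_1$, and homotopy invariance together with the normalization of the constant map $0$ gives index $1$.

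The more delicate step is $i(A,\mathcal{P}\cap\Omega_2,\mathcal{P})=0$ from (C2), and I expect this to be the main obstacle, since the hypothesis only forbids pushing in the single direction $\varphi$ and must be converted into a genuine index statement. Because $A$ is completely continuous, $A(\mathcal{P}\cap\overline{\Omega}_2)$ is bounded, say $\|Ax\|\le N$, and I choose $R$ with $\overline{\Omega}_2\subset B_R$ and a scalar $\mu_0>(R+N)/\|\varphi\|$. Taking the homotopy $H(t,x)=Ax+t\mu_0\varphi$, the extended form of (C2) (valid now for all $\lambda\ge 0$ by the reduction) yields $x\neq Ax+\lambda\varphi$ on $\mathcal{P}\cap\partial\Omega_2$, so $x\neq H(t,x)$ for every $t\in[0,1]$. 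At $t=1$ any fixed point of $A+\mu_0\varphi$ would satisfy $\|x\|\ge\mu_0\|\varphi\|-N>R$, which is impossible in $\overline{\Omega}_2$; thus $A+\mu_0\varphi$ is fixed-point-free on $\mathcal{P}\cap\overline{\Omega}_2$ and has index $0$, and homotopy invariance transfers this to $A$.

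Finally, additivity of the index gives
\[
i\bigl(A,\mathcal{P}\cap(\Omega_2\setminus\overline{\Omega}_1),\mathcal{P}\bigr)=i(A,\mathcal{P}\cap\Omega_2,\mathcal{P})-i(A,\mathcal{P}\cap\Omega_1,\mathcal{P})=0-1=-1\neq 0,
\]
so the solution property produces a fixed point in $\mathcal{P}\cap(\Omega_2\setminus\overline{\Omega}_1)\subset\mathcal{P}\cap(\overline{\Omega}_2\setminus\Omega_1)$. For the symmetric statement, with (C1) holding on $\partial\Omega_2$ and (C2) on $\partial\Omega_1$, the same two computations give indices $1$ and $0$ on $\Omega_2$ and $\Omega_1$ respectively, so the annular index equals $1-0=1\neq 0$, again yielding a fixed point. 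This completes the plan.
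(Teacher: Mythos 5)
The paper never proves this lemma: it is stated as a known cone fixed point theorem with citations to Deimling, Lan--Webb and Amann, so there is no internal proof to compare against. Your index-theoretic argument is essentially the proof found in those sources: the boundary reduction, the computation $i(A,\mathcal{P}\cap\Omega_1,\mathcal{P})=1$ from (C1) via the homotopy $tAx$, the computation $i(A,\mathcal{P}\cap\Omega_2,\mathcal{P})=0$ from (C2) via the homotopy $Ax+t\mu_0\varphi$ with $\mu_0$ large, the additivity step giving a nonzero index $-1$ (respectively $+1$ in the symmetric case) on the open annulus, and the solution property are all invoked correctly and in the right order.

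There is, however, one genuine gap in what you wrote: $A$ is only defined on $\mathcal{P}\cap(\overline{\Omega}_{2}\setminus\Omega_{1})$, so the two indices $i(A,\mathcal{P}\cap\Omega_1,\mathcal{P})$ and $i(A,\mathcal{P}\cap\Omega_2,\mathcal{P})$ on which your entire argument rests are not defined as stated --- $A$ has no values on $\mathcal{P}\cap\Omega_1$, nor on most of $\mathcal{P}\cap\Omega_2$, and the homotopies $tAx$ and $Ax+t\mu_0\varphi$ must be defined on the closures of these sets, not just on their boundaries. The standard repair is to first extend $A$ to a completely continuous map $\tilde{A}:\mathcal{P}\cap\overline{\Omega}_{2}\rightarrow\mathcal{P}$ by Dugundji's extension theorem; the extension can be taken with range in the closed convex hull of $A\bigl(\mathcal{P}\cap(\overline{\Omega}_{2}\setminus\Omega_{1})\bigr)$, which lies in $\mathcal{P}$ by convexity and is relatively compact by Mazur's theorem, so complete continuity is preserved. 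Your argument then goes through verbatim for $\tilde{A}$: the admissibility of both homotopies is tested only on $\mathcal{P}\cap\partial\Omega_1$ and $\mathcal{P}\cap\partial\Omega_2$, where $\tilde{A}=A$, and the fixed point produced by additivity lies in $\mathcal{P}\cap(\Omega_{2}\setminus\overline{\Omega}_{1})$, where again $\tilde{A}=A$, hence it is a fixed point of $A$ itself. With that extension step inserted, your proof is complete.
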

\section{Existence results}
Throughout this section, we assume that
\begin{itemize}
\item[(H)] $f\in C([0, 1]\times[0,\infty),[0,\infty))$, and the parameter $\lambda \in [0,\frac{2}{\eta^{2}}).$
\end{itemize}

Let $E=C([0,1],\ \mathbb{R})$ be the Banach space endowed with the sup norm \[\|x\|=\\sup_{t\in[0, 1]}|x(t)|.\]
Let $ \theta \in (0, \frac{1}{2})$, define the cone $\mathcal{P}$ in $E$ by

$$\mathcal{P}= \left\lbrace x \in E,\ x \geq 0 : \min_{t\in [\theta, 1-\theta]} x(t) \geq  \theta^{2}\|x\| \right\rbrace.$$
Given a positive number $r$, define the subset $\partial\Omega_{r}$ of $E$ by
\[ \partial\Omega_{r}=\big\{x\in E: \|x\|<r\big \},\]
and also, define the operator $\mathcal{A}:E\rightarrow E$ by
\begin{equation}\label{eq14}
(\mathcal{A}x)(t)= \int_{0}^{1}\mathcal{K}(t,s)f(s,x(s))ds.
\end{equation}

\begin{lemma} \label{lem 3.1}
If the hypothesis \rm{(H)} holds, then $\mathcal{A}(\mathcal{P})\subset \mathcal{P}.$
\end{lemma}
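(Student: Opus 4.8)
The plan is to observe that $\mathcal{A}$ is nothing but the solution operator of Lemma~\ref{lem 2.5} with the forcing term $h$ replaced by the Nemytskii composition $f(\cdot, x(\cdot))$, so that the cone membership of $\mathcal{A}x$ follows directly from the a~priori estimate already established in Lemma~\ref{lem 2.7}. Concretely, I would fix an arbitrary $x \in \mathcal{P}$ and set $h(s) := f(s, x(s))$. The two things that must be checked are that $h$ is an admissible forcing term in the sense of Lemma~\ref{lem 2.7}, and that hypothesis (H) supplies the parameter restriction $0 \le \lambda < 2/\eta^{2}$ required there.

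First I would verify that $h \in C([0,1],[0,\infty))$. Since $x \in \mathcal{P} \subset E$, the map $t \mapsto x(t)$ is continuous and, by the definition of the cone, satisfies $x(t) \ge 0$ on $[0,1]$; thus $(t, x(t))$ lies in $[0,1]\times[0,\infty)$ for every $t$. Because $f \in C([0,1]\times[0,\infty),[0,\infty))$ by (H), the superposition $t \mapsto f(t,x(t))$ is continuous and nonnegative, i.e. $h \in C([0,1],[0,\infty))$. Moreover, (H) also guarantees $\lambda \in [0, 2/\eta^{2})$, which is precisely the range assumed in Lemma~\ref{lem 2.7}.

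With these checks in place, $(\mathcal{A}x)(t) = \int_{0}^{1}\mathcal{K}(t,s)h(s)\,ds$ is, by Lemma~\ref{lem 2.5}, the unique solution of the linear problem \eqref{eq3}--\eqref{eq2} associated with this $h$. Applying Lemma~\ref{lem 2.7} to this solution yields at once that $\mathcal{A}x \ge 0$ on $[0,1]$ and that
$$\min_{t \in [\theta, 1-\theta]} (\mathcal{A}x)(t) \ge \theta^{2}\|\mathcal{A}x\|,$$
which are exactly the two defining conditions of $\mathcal{P}$. Hence $\mathcal{A}x \in \mathcal{P}$, and since $x$ was arbitrary, $\mathcal{A}(\mathcal{P}) \subset \mathcal{P}$.

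I expect no genuine obstacle here: the substantive analytic work, namely the sign of the Green's function $\mathcal{K}$ and the Harnack-type lower bound, has already been carried out in Lemmas~\ref{lem 2.6} and~\ref{lem 2.7}. The only point requiring any care is the continuity of the superposition $t \mapsto f(t,x(t))$, which is immediate from the joint continuity of $f$. Thus the proof reduces to verifying that the hypotheses of Lemma~\ref{lem 2.7} transfer to the nonlinear forcing term $h = f(\cdot, x(\cdot))$.
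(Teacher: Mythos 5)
Your proposal is correct and follows essentially the same route as the paper, whose entire proof is the one-line observation that the claim follows from \eqref{eq14} and Lemma \ref{lem 2.7}; you have simply spelled out the details (continuity and nonnegativity of $h(s)=f(s,x(s))$ via (H), and the parameter range $0\le\lambda<2/\eta^{2}$) that the paper leaves implicit.
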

\begin{proof}
By \eqref{eq14} and Lemma \ref{lem 2.7}, we have $\mathcal{A}(\mathcal{P})\subset \mathcal{P}.$
\end{proof}
In order to discuss the complete continuity of the operator $\mathcal{A}$, denote the operator $\mathcal{A}$ by
\[\mathcal{A}=\mathcal{A}_{1}+\mathcal{A}_{2},\]
where the operators $\mathcal{A}_{1}$ and $\mathcal{A}_{2}$ are defined, respectively by
\begin{equation}\label{eq15}
(\mathcal{A}_{1}x)(t)= \int_{0}^{1}\mathcal{G}(t,s)f(s,x(s))ds,
\end{equation}
and
\begin{equation}\label{eq16}
(\mathcal{A}_{2}x)(t)= \frac{\lambda{t}}{2-\lambda{\eta^{2}}}\int_{0}^{1} \mathcal{H}(\eta,s)f(s,x(s))ds.
\end{equation}
By Lemma \ref{lem 2.7}, it follows that $\mathcal{A}_{1}(\mathcal{P})\subset \mathcal{P}$, and the complete continuity of the operator $\mathcal{A}_{1}$ was verified in \cite{Dong,Dong2}. Also, due to Lemma \ref{lem 2.7}, we have the invariance property $\mathcal{A}_{2}(\mathcal{P})\subset \mathcal{P}$. Furthermore, the kernel $\frac{\lambda{t}}{2-\lambda{\eta^{2}}}\mathcal{H}(\eta,s)$ of $\mathcal{A}_{2}$ is continuous on $[0,1]\times[0,1]$, and using a standard argument, we can easily check that the operator $\mathcal{A}_{2}$ is also completely continuous.
Thus, we get the following lemma:

\begin{lemma} \label{lem 3.2}
If the hypothesis \rm{(H)} holds, then the operator $\mathcal{A}:\mathcal{P}\rightarrow \mathcal{P}$ is completely continuous.
\end{lemma}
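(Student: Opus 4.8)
The plan is to establish complete continuity through the additive decomposition $\mathcal{A}=\mathcal{A}_1+\mathcal{A}_2$ recorded just before the statement, together with the elementary fact that a finite sum of completely continuous operators is again completely continuous. Since Lemma \ref{lem 3.1} already gives the invariance $\mathcal{A}(\mathcal{P})\subset\mathcal{P}$ (and likewise $\mathcal{A}_1(\mathcal{P})\subset\mathcal{P}$ and $\mathcal{A}_2(\mathcal{P})\subset\mathcal{P}$), it remains only to verify that each summand, viewed as an operator on $E$, is continuous and maps bounded sets to relatively compact sets. The complete continuity of $\mathcal{A}_1$ is a known result, verified in \cite{Dong,Dong2}, so the entire burden of the argument falls on $\mathcal{A}_2$.

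For $\mathcal{A}_2$ I would invoke the Arzel\`a--Ascoli theorem. Fix a bounded set $B\subset E$, say $\|x\|\le R$ for $x\in B$, and let $M=\max\{f(s,u):0\le s\le1,\ 0\le u\le R\}$, which is finite by (H). Because $\eta$ is fixed, the kernel $\mathcal{H}(\eta,\cdot)$ is a single continuous function of $s$ on $[0,1]$: the only potentially singular factor $s^{\alpha-2}$ occurs on the branch $s\ge\eta$, where $s$ is bounded away from $0$, while on $s\le\eta$ the factor is $s^{\alpha-1}$, which is continuous up to $0$ since $\alpha>1$. Hence $c:=\int_0^1\mathcal{H}(\eta,s)\,ds<\infty$, and
\[
|(\mathcal{A}_2x)(t)|=\frac{\lambda t}{2-\lambda\eta^2}\int_0^1\mathcal{H}(\eta,s)f(s,x(s))\,ds\le \frac{\lambda Mc}{2-\lambda\eta^2},
\]
which gives uniform boundedness of $\mathcal{A}_2(B)$.

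The decisive structural observation is that $\mathcal{A}_2x(t)=t\,\Lambda(x)$, where $\Lambda(x)=\frac{\lambda}{2-\lambda\eta^2}\int_0^1\mathcal{H}(\eta,s)f(s,x(s))\,ds$ is a scalar depending only on $x$; thus $\mathcal{A}_2$ has one-dimensional range spanned by the function $t\mapsto t$. Equicontinuity is therefore immediate: for $t_1,t_2\in[0,1]$ one has $|(\mathcal{A}_2x)(t_1)-(\mathcal{A}_2x)(t_2)|\le|\Lambda(x)|\,|t_1-t_2|$, with $|\Lambda(x)|$ uniformly bounded on $B$, so the estimate is uniform in $x$. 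Arzel\`a--Ascoli then yields that $\mathcal{A}_2(B)$ is relatively compact. Continuity of $\mathcal{A}_2$ follows from the continuity of $f$: if $x_n\to x$ in $E$ then $f(s,x_n(s))\to f(s,x(s))$ uniformly in $s$, and since $\mathcal{H}(\eta,\cdot)$ is integrable the scalars satisfy $\Lambda(x_n)\to\Lambda(x)$, whence $\|\mathcal{A}_2x_n-\mathcal{A}_2x\|\to0$.

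Combining the cited complete continuity of $\mathcal{A}_1$ with that of $\mathcal{A}_2$, the operator $\mathcal{A}=\mathcal{A}_1+\mathcal{A}_2$ is completely continuous, and by Lemma \ref{lem 3.1} it maps $\mathcal{P}$ into $\mathcal{P}$. The one point demanding care is the singularity of the Green's function at $s=0$; for $\mathcal{A}_1$ this is absorbed into the cited verification, and for $\mathcal{A}_2$ I expect the only mild obstacle to be checking that $\mathcal{H}(\eta,\cdot)$ is genuinely integrable (indeed continuous) near $s=0$, which the branch analysis above settles.
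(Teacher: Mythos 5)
Your proposal is correct and takes essentially the same route as the paper: the identical splitting $\mathcal{A}=\mathcal{A}_{1}+\mathcal{A}_{2}$, the same citation of \cite{Dong,Dong2} for the complete continuity of $\mathcal{A}_{1}$, and the continuity of the kernel $\frac{\lambda t}{2-\lambda\eta^{2}}\mathcal{H}(\eta,s)$ on $[0,1]\times[0,1]$ as the basis for handling $\mathcal{A}_{2}$, whose complete continuity the paper dispatches as ``a standard argument.'' Your Arzel\`a--Ascoli verification, including the rank-one observation $(\mathcal{A}_{2}x)(t)=t\,\Lambda(x)$, simply supplies the details that the paper leaves implicit.
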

The following lemma transforms the boundary value problem \eqref{eq1} and \eqref{eq2} into an equivalent fixed point problem.
\begin{lemma} \label{lem 3.3}
If the hypothesis \rm{(H)} holds, then the problem of nonnegative solutions of \eqref{eq1} and \eqref{eq2} is equivalent to the fixed point problem $x=\mathcal{A}x$, $x\in \mathcal{P}$.
\end{lemma}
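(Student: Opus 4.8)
The plan is to prove the two implications separately, using in both directions the substitution $h(t)=f(t,x(t))$ together with Lemma~\ref{lem 2.5}. First I note that under hypothesis (H) one has $\lambda\in[0,\tfrac{2}{\eta^{2}})$, so $\lambda\eta^{2}<2$ and in particular $\lambda\eta^{2}\neq2$; hence Lemma~\ref{lem 2.5} is applicable throughout.

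For the forward implication I would start from a nonnegative solution $x$ of \eqref{eq1}--\eqref{eq2}. Since $x\in C([0,1])$ is nonnegative and $f\in C([0,1]\times[0,\infty),[0,\infty))$, the composite $h(t):=f(t,x(t))$ lies in $C([0,1],[0,\infty))$. Equation \eqref{eq1} then reads $D^{\alpha}x(t)+h(t)=0$ while $x$ satisfies the boundary conditions \eqref{eq2}, so $x$ solves the linear problem \eqref{eq3} subject to \eqref{eq2} with this $h$. Invoking the uniqueness statement of Lemma~\ref{lem 2.5} gives
\[
x(t)=\int_{0}^{1}\mathcal{K}(t,s)h(s)\,ds=\int_{0}^{1}\mathcal{K}(t,s)f(s,x(s))\,ds=(\mathcal{A}x)(t),
\]
so $x$ is a fixed point of $\mathcal{A}$. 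Because $h\geq0$, Lemma~\ref{lem 2.7} simultaneously yields $x\geq0$ and $\min_{t\in[\theta,1-\theta]}x(t)\geq\theta^{2}\|x\|$, which places $x$ in $\mathcal{P}$.

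For the reverse implication I would take $x\in\mathcal{P}$ with $x=\mathcal{A}x$. Then $x$ is continuous and nonnegative, so once more $h(t):=f(t,x(t))\in C([0,1],[0,\infty))$ and $x(t)=\int_{0}^{1}\mathcal{K}(t,s)h(s)\,ds$. By Lemma~\ref{lem 2.5} this integral is exactly the unique solution of \eqref{eq3} subject to \eqref{eq2}, whence $D^{\alpha}x(t)+h(t)=0$, i.e.\ $D^{\alpha}x(t)+f(t,x(t))=0$ on $[0,1]$, and the integral boundary conditions \eqref{eq2} hold. As $x\in\mathcal{P}$ is nonnegative, $x$ is a nonnegative solution of \eqref{eq1}--\eqref{eq2}, and the equivalence follows.

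The only delicate point, which I expect to be the main obstacle, is the reverse direction: one must be certain that the integral representation is genuinely $\alpha$-differentiable and returns $-h$ under $D^{\alpha}$, rather than merely agreeing with the solution pointwise. This regularity is not a separate computation but is already built into Lemma~\ref{lem 2.5}, whose derivation rests on $D^{\alpha}I^{\alpha}h=h$ (Lemma~\ref{lem 2.2}) and on the continuity of $h$ through Lemma~\ref{lem 2.4}. Appealing to the \emph{uniqueness} assertion of Lemma~\ref{lem 2.5} thus lets me pass from the integral equation back to the differential equation cleanly, without re-differentiating the Green's function.
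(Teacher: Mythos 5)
Your proposal is correct and takes essentially the same route the paper intends: the paper's own ``proof'' merely defers to the argument of \cite[Lemma 3.3]{Zhong}, and what you have written out is exactly that standard argument --- Lemma~\ref{lem 2.5} (applicable since $\lambda\eta^{2}<2$ under (H)) to pass between the differential problem and the integral equation in both directions, and Lemma~\ref{lem 2.7} to place a nonnegative solution in the cone $\mathcal{P}$. Your closing remark about the reverse direction is also well taken: the fact that the integral representation genuinely satisfies $D^{\alpha}x+h=0$ is already encoded in Lemma~\ref{lem 2.5} via Lemmas~\ref{lem 2.2} and~\ref{lem 2.4}, so no re-differentiation of the kernel is needed.
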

\begin{proof}
It follows easily by using the same argument as for the proof of \emph{\cite[Lemma 3.3]{Zhong}}.
\end{proof}

For convenience, we introduce the following notations
\begin{align*}
f_{0} &= \lim_{x\rightarrow 0^{+}}\min_{t\in [0,1]} \frac{f(t,x)}{x},\ \ f^{\infty} = \lim_{x\rightarrow +\infty} \max_{t\in [0,1]}\frac{f(t,x)}{x},\\
f^{0} &= \lim_{x\rightarrow 0^{+}} \max_{t\in [0,1]}\frac{f(t,x)}{x},\ \ f_{\infty} = \lim_{x\rightarrow +\infty}\min_{t\in [0,1]} \frac{f(t,x)}{x},\\
\Lambda_{1}&= \Bigg(\theta^{4}\int_{\theta}^{1-\theta}\bigg(\mathcal{G}(s,s)
+\frac{\lambda}{2-\lambda\eta^{2}}\mathcal{H}(\eta,s)\bigg)ds\Bigg)^{-1},\ \ \Lambda_{2}=\Bigg(\bigg(1+\frac{\lambda}{2-\lambda\eta^{2}}\bigg)\int_{0}^{1}\mathcal{G}(s,s)ds\Bigg)^{-1}.
\end{align*}

Now, we will state and prove our main results.
\begin{theorem}\label{thm3.1}
Assume that the hypothesis \rm{(H)} holds. If $ f_{0} >\Lambda_{1} $ and $ f^{\infty}<\frac{\Lambda_{2}}{2}$, then the  problem \eqref{eq1} and \eqref{eq2} has at least one positive solution.
\end{theorem}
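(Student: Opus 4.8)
The plan is to apply the fixed point theorem, Lemma \ref{lem 2.8}, to the operator $\mathcal{A}$ on the cone $\mathcal{P}$, producing a fixed point in an annular region $\mathcal{P}\cap(\overline{\Omega}_{r_2}\setminus\Omega_{r_1})$ for suitable radii $0<r_1<r_2$. By Lemmas \ref{lem 3.1} and \ref{lem 3.2}, $\mathcal{A}$ maps $\mathcal{P}$ into $\mathcal{P}$ and is completely continuous, so the structural hypotheses of Lemma \ref{lem 2.8} are met. I would verify condition (C2) on the inner sphere $\partial\Omega_{r_1}$ from $f_{0}>\Lambda_{1}$, and condition (C1) on the outer sphere $\partial\Omega_{r_2}$ from $f^{\infty}<\Lambda_{2}/2$; the ``same conclusion'' variant of Lemma \ref{lem 2.8} then yields the fixed point.

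For the inner sphere, since $f_{0}>\Lambda_{1}$ I first fix $\varepsilon>0$ with $f_{0}-\varepsilon\ge\Lambda_{1}$ and choose $r_1>0$ so small that $f(s,x)\ge\Lambda_{1}x$ for all $s\in[0,1]$ and $0\le x\le r_1$. For $x\in\mathcal{P}\cap\partial\Omega_{r_1}$ and $t\in[\theta,1-\theta]$, the cone inequality gives $x(s)\ge\theta^{2}\|x\|$ on $[\theta,1-\theta]$, hence $f(s,x(s))\ge\Lambda_{1}\theta^{2}\|x\|$ there. Combining this with the lower bounds $\mathcal{G}(t,s)\ge\theta^{2}\mathcal{G}(s,s)$ and $t\ge\theta^{2}$ (so that the $\mathcal{H}(\eta,\cdot)$ term also carries a factor $\theta^{2}$) from Lemma \ref{lem 2.6}, and restricting the integral to $[\theta,1-\theta]$, one obtains $\mathcal{A}x(t)\ge\theta^{4}\Lambda_{1}\|x\|\int_{\theta}^{1-\theta}\bigl(\mathcal{G}(s,s)+\tfrac{\lambda}{2-\lambda\eta^{2}}\mathcal{H}(\eta,s)\bigr)\,ds=\|x\|$ by the definition of $\Lambda_{1}$. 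Taking $\varphi\equiv1\in\mathcal{P}\setminus\{0\}$, if $x=\mathcal{A}x+\mu\varphi$ held for some $x\in\mathcal{P}\cap\partial\Omega_{r_1}$ and $\mu>0$, then on $[\theta,1-\theta]$ we would have $x(t)\ge\|x\|+\mu$, forcing $\|x\|\ge\min_{[\theta,1-\theta]}x\ge\|x\|+\mu>\|x\|$, a contradiction; thus (C2) holds.

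For the outer sphere, from $f^{\infty}<\Lambda_{2}/2$ I pick $R_0>0$ with $f(t,x)\le\tfrac{\Lambda_{2}}{2}x$ whenever $x\ge R_0$, and set $M=\max\{f(t,x):t\in[0,1],\,0\le x\le R_0\}$, so that $f(t,x)\le\tfrac{\Lambda_{2}}{2}x+M$ for all $x\ge0$. The upper bounds $\mathcal{G}(t,s)\le\mathcal{G}(s,s)$, $\mathcal{H}(\eta,s)\le\mathcal{G}(s,s)$ and $t\le1$ give $\|\mathcal{A}x\|\le\bigl(1+\tfrac{\lambda}{2-\lambda\eta^{2}}\bigr)\int_{0}^{1}\mathcal{G}(s,s)f(s,x(s))\,ds$, and inserting the bound on $f$ together with the definition of $\Lambda_{2}$ yields $\|\mathcal{A}x\|\le\tfrac12\|x\|+M/\Lambda_{2}$. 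Choosing $r_2>\max\{r_1,\,2M/\Lambda_{2}\}$ then gives $\|\mathcal{A}x\|\le r_2=\|x\|$ on $\partial\Omega_{r_2}$, which is (C1). By Lemma \ref{lem 2.8}, $\mathcal{A}$ has a fixed point $x$ with $r_1\le\|x\|\le r_2$; since $x\in\mathcal{P}$ and $\|x\|\ge r_1>0$, Lemma \ref{lem 3.3} shows it is a positive solution of \eqref{eq1}--\eqref{eq2}.

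The two integral estimates are routine consequences of Lemma \ref{lem 2.6} and the cone definition. The points requiring care, and which I expect to be the main obstacle, are the bookkeeping of the powers of $\theta$ and the absorption of the bounded part $M$. The factor $\theta^{4}$ in $\Lambda_{1}$ is precisely the product of the cone constant $\theta^{2}$ (from $x(s)\ge\theta^{2}\|x\|$) with the Green's function constant $\theta^{2}$ (from $\mathcal{G}(t,s)\ge\theta^{2}\mathcal{G}(s,s)$ and $t\ge\theta^{2}$), and matching it exactly against $\Lambda_{1}^{-1}$ is what makes (C2) work. The absorption of $M$ is exactly why the hypothesis reads $f^{\infty}<\Lambda_{2}/2$ rather than $f^{\infty}<\Lambda_{2}$: the factor $\tfrac12$ leaves room to swallow $M/\Lambda_{2}$ into half of $\|x\|$ once $r_2$ is taken large enough.
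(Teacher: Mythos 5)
Your proposal is correct and follows essentially the same route as the paper's own proof: the same fixed point theorem (Lemma \ref{lem 2.8}) with $\varphi\equiv1$, condition (C2) verified on the small sphere via $f_{0}>\Lambda_{1}$ with the identical $\theta^{4}$ bookkeeping and contradiction $\|x_{0}\|\geq\|x_{0}\|+\mu$, and condition (C1) on the large sphere via the affine bound $f(t,x)\leq\tfrac{\Lambda_{2}}{2}x+M$ and absorption of $M/\Lambda_{2}$ by choosing the outer radius large enough. The only differences are cosmetic (your choice $r_{2}>\max\{r_{1},2M/\Lambda_{2}\}$ versus the paper's $\rho_{2}=\max\{2\rho_{1},2\gamma_{2}\Lambda_{2}^{-1}\}$), so no further comment is needed.
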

\begin{proof}

By Lemma \ref{lem 3.2}, we get that the operator $\mathcal{A}:\mathcal{P}\rightarrow \mathcal{P}$ is completely continuous.

Since $ f_{0} >\Lambda_{1} $, there exists  $ \rho_{1} > 0 $ such that $ f(t,x)\geq \Lambda_{1} x $, for $0 <x \leq \rho_{1}$ and $t\in[0,1].$
Thus
\[f(t,x(t))\geq \Lambda_{1} x(t) \  \text{for} \ t\in[0,1] \ \text{and} \ x\in \mathcal{P}\cap \partial\Omega_{\rho_{1}} .\]
By choosing $\varphi\equiv1$, it is obvious that $ \varphi\in \mathcal{P}\backslash \{0\}$. Now, we show that for the specified $\varphi$, the condition \rm{(C2)} in Lemma \ref{lem 2.8} is verified.
Assume that there exist a function $x_{0}\in \mathcal{P}\cap \partial\Omega_{\rho_{1}}$ and a positive number $\lambda_{0}$ such that
\[ x_{0}=\mathcal{A}x_{0}+\lambda_{0} \varphi.\]
Then, by Lemma \ref{lem 2.6} and \ref{lem 2.7}, for each $t\in[\theta,1-\theta]$, we have

\begin{equation*}
\begin{split}
x_{0}(t)&= \int_{0}^{1}\mathcal{K}(t,s)f(s,{x}_{0}(s))ds+\lambda_{0} \\
&= \int_{0} ^{1} \mathcal{G}(t,s)f(s,{x}_{0}(s)) ds+\frac{\lambda{t}}{2-\lambda{\eta^{2}}}\int_{0}^{1}\mathcal{H}(\eta,s)f(s,{x}_{0}(s))ds+\lambda_{0} \\
&\geq \theta^{2}\int_{\theta} ^{1-\theta} \mathcal{G}(s,s)\Lambda_{1}{x}_{0}(s)ds +\frac{\lambda{\theta^{2}}}{2-\lambda{\eta^{2}}}\int_{\theta}^{1-\theta}\mathcal{H}(\eta,s)\Lambda_{1}{x}_{0}(s)ds+\lambda_{0}\\
&\geq \theta^{2}\int_{\theta} ^{1-\theta} \mathcal{G}(s,s)\Lambda_{1}\theta^{2}\|{x}_{0}\|ds +\frac{\lambda{\theta^{2}}}{2-\lambda{\eta^{2}}}\int_{\theta}^{1-\theta}\mathcal{H}(\eta,s)\Lambda_{1}\theta^{2}\|{x}_{0}\|ds+\lambda_{0}\\
&=\|{x}_{0}\|\Lambda_{1}\Bigg(\theta^{4}\int_{\theta}^{1-\theta}\bigg(\mathcal{G}(s,s)
+\frac{\lambda}{2-\lambda\eta^{2}}\mathcal{H}(\eta,s)\bigg)ds\Bigg)+\lambda_{0}\\
&=\|{x}_{0}\|+\lambda_{0}.
\end{split}
\end{equation*}
Thus, $\|{x}_{0}\|\geq \|{x}_{0}\|+\lambda_{0}$. This is a contradiction. Hence the operator $\mathcal{A}$ satisfies the condition \rm{(C2)} in Lemma \ref{lem 2.8}.\\
We next show that the operator $\mathcal{A}$ satisfies the condition \rm{(C1)} in Lemma \ref{lem 2.8}.
The fact that $ f^{\infty}<\frac{\Lambda_{2}}{2}$ says us that there exists a constant $\gamma_{1}>0$ such that
\begin{equation}\label{eq17}
f(t,x)\leq \frac{\Lambda_{2}}{2} x \  \text{for} \ t\in[0,1] \ \text{and} \ x\geq \gamma_{1}.
\end{equation}
Define now
\begin{equation*}
\gamma_{2}=\max\{f(t,x):0\leq t\leq1,\ 0\leq x\leq \gamma_{1}\}.
\end{equation*}
So, by virtue of \eqref{eq17}, we get
\begin{equation}\label{eq18}
f(t,x)\leq \frac{\Lambda_{2}}{2} x+\gamma_{2} \  \text{for} \ t\in[0,1] \ \text{and} \ x\geq 0.
\end{equation}
Set $\rho_{2}=\max\{2\rho_{1},2\gamma_{2}\Lambda_{2}^{-1}\}$ and $x \in \mathcal{P}\cap \partial\Omega_{\rho_{2}}$. Then, by Lemma \ref{lem 2.6} and \eqref{eq18}, we obtain

\begin{equation*}
\begin{split}
\|\mathcal{A}x\|&= \max_{t \in [0,1]}\int_{0}^{1}\mathcal{K}(t,s)f(s,x(s))ds \\
&= \max_{t \in [0,1]}\Bigg\{\int_{0} ^{1} \mathcal{G}(t,s)f(s,x(s)) ds+\frac{\lambda{t}}{2-\lambda{\eta^{2}}}\int_{0}^{1}\mathcal{H}(\eta,s)f(s,x(s))ds \Bigg\}\\
&\leq \int_{0}^{1}\mathcal{G}(s,s)\bigg(\frac{\Lambda_{2}}{2} x(s)+\gamma_{2}\bigg)ds+\frac{\lambda}{2-\lambda{\eta^{2}}}\int_{0}^{1}\mathcal{G}(s,s)\bigg(\frac{\Lambda_{2}}{2} x(s)+\gamma_{2}\bigg)ds\\
&\leq \bigg(\frac{\Lambda_{2}}{2} \|x\|+\gamma_{2}\bigg) \bigg(1+\frac{\lambda}{2-\lambda{\eta^{2}}}\bigg)\int_{0}^{1}\mathcal{G}(s,s)ds \\
&= \frac{\|x\|}{2}+\gamma_{2}{\Lambda_{2}^{-1}}\\
&\leq  \frac{\|x\|}{2}+\frac{\|x\|}{2}\\
&=\|x\|.\\
\end{split}
\end{equation*}
Hence, the condition \rm{(C1)} in Lemma \ref{lem 2.8} is satisfied. By Lemma \ref{lem 2.8} and Lemma \ref{lem 3.3}, the operator $\mathcal{A}$ has at least one fixed point $ x\in\mathcal{P}\cap (\bar{\Omega}_{\rho_{2}}\backslash\Omega_{\rho_{1}})$, which is a positive solution of the boundary value problem \eqref{eq1} and \eqref{eq2}. The proof is complete.
\end{proof}

\begin{theorem}\label{thm3.2}
Assume that the hypothesis \rm{(H)} holds. If $ f^{0}<\Lambda_{2} $ and $ f_{\infty}>\Lambda_{1}$, then the  problem \eqref{eq1} and \eqref{eq2} has at least one positive solution.
\end{theorem}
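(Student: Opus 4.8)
The plan is to apply the fixed point Lemma~\ref{lem 2.8} exactly as in Theorem~\ref{thm3.1}, but with the two asymptotic hypotheses playing swapped roles: here $f^{0}<\Lambda_{2}$ is the small-argument condition that will force the norm inequality (C1) on a small ball, while $f_{\infty}>\Lambda_{1}$ is the large-argument condition that will yield the non-existence relation (C2) on a large ball. I would therefore invoke the ``standard'' form of Lemma~\ref{lem 2.8} (with (C1) on $\partial\Omega_{1}$ and (C2) on $\partial\Omega_{2}$), rather than the ``same conclusion'' form used for Theorem~\ref{thm3.1}. Complete continuity of $\mathcal{A}:\mathcal{P}\to\mathcal{P}$ is already guaranteed by Lemma~\ref{lem 3.2}, so the work reduces to verifying the two boundary conditions and then invoking Lemma~\ref{lem 3.3} to translate the fixed point into a positive solution.

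First I would verify (C1) on a small ball. Since $f^{0}<\Lambda_{2}$, there is $\rho_{1}>0$ with $f(t,x)\le\Lambda_{2}x$ for all $t\in[0,1]$ and $0\le x\le\rho_{1}$. For $x\in\mathcal{P}\cap\partial\Omega_{\rho_{1}}$ one has $0\le x(s)\le\|x\|=\rho_{1}$, hence $f(s,x(s))\le\Lambda_{2}\|x\|$. Using the upper bounds $\mathcal{G}(t,s)\le\mathcal{G}(s,s)$ and $\mathcal{H}(\eta,s)\le\mathcal{G}(s,s)$ from Lemma~\ref{lem 2.6} together with $t\le1$, the same computation as in Theorem~\ref{thm3.1} gives
\[
\|\mathcal{A}x\|\le\Big(1+\frac{\lambda}{2-\lambda\eta^{2}}\Big)\int_{0}^{1}\mathcal{G}(s,s)\,\Lambda_{2}\|x\|\,ds=\Lambda_{2}\|x\|\,\Lambda_{2}^{-1}=\|x\|,
\]
which is precisely (C1) on $\partial\Omega_{\rho_{1}}$.

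Next I would verify (C2) on a large ball, mirroring the argument for condition (C2) in Theorem~\ref{thm3.1}. Since $f_{\infty}>\Lambda_{1}$, there is $R>0$ with $f(t,x)\ge\Lambda_{1}x$ for all $t\in[0,1]$ and $x\ge R$. Choose $\rho_{2}>\max\{\rho_{1},R\theta^{-2}\}$ (so that $\overline{\Omega}_{\rho_{1}}\subset\Omega_{\rho_{2}}$) and set $\varphi\equiv1\in\mathcal{P}\setminus\{0\}$. If $x_{0}=\mathcal{A}x_{0}+\lambda_{0}\varphi$ for some $x_{0}\in\mathcal{P}\cap\partial\Omega_{\rho_{2}}$ and $\lambda_{0}>0$, then for $s\in[\theta,1-\theta]$ the cone membership gives $x_{0}(s)\ge\theta^{2}\|x_{0}\|=\theta^{2}\rho_{2}\ge R$, so the lower bound $f(s,x_{0}(s))\ge\Lambda_{1}x_{0}(s)$ is available there. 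Restricting the nonnegative integrals to $[\theta,1-\theta]$ and using $\mathcal{G}(t,s)\ge\theta^{2}\mathcal{G}(s,s)$ from Lemma~\ref{lem 2.6}, the prefactor bound $t\ge\theta\ge\theta^{2}$ for the second integral (leaving $\mathcal{H}(\eta,s)$ intact, since it already sits inside $\Lambda_{1}$), together with the cone estimate $x_{0}(s)\ge\theta^{2}\|x_{0}\|$, the exact chain of inequalities from Theorem~\ref{thm3.1} reproduces $x_{0}(t)\ge\|x_{0}\|\,\Lambda_{1}\Lambda_{1}^{-1}+\lambda_{0}=\|x_{0}\|+\lambda_{0}$ for every $t\in[\theta,1-\theta]$. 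Since $\|x_{0}\|\ge x_{0}(t)$, this forces $\|x_{0}\|\ge\|x_{0}\|+\lambda_{0}$, contradicting $\lambda_{0}>0$; hence (C2) holds on $\partial\Omega_{\rho_{2}}$.

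With both conditions verified, Lemma~\ref{lem 2.8} produces a fixed point $x\in\mathcal{P}\cap(\overline{\Omega}_{\rho_{2}}\setminus\Omega_{\rho_{1}})$, and since $\|x\|\ge\rho_{1}>0$ this $x$ is nontrivial, whence $\min_{t\in[\theta,1-\theta]}x(t)\ge\theta^{2}\|x\|>0$; by Lemma~\ref{lem 3.3} it is a positive solution of \eqref{eq1}--\eqref{eq2}. I expect the only genuinely delicate point to be the calibration of the large radius $\rho_{2}$: one must take it large enough that $\theta^{2}\rho_{2}\ge R$, so that the asymptotic lower bound coming from $f_{\infty}>\Lambda_{1}$ is actually available on the whole subinterval $[\theta,1-\theta]$ on which the cone estimate $x_{0}(s)\ge\theta^{2}\|x_{0}\|$ is guaranteed; everything else is a bookkeeping reversal of the estimates already carried out for Theorem~\ref{thm3.1}.
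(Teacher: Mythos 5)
Your proposal is correct and matches the paper's own proof essentially step for step: the paper likewise derives (C1) on the small ball $\partial\Omega_{\rho_{1}}$ from $f^{0}<\Lambda_{2}$ using the upper bounds $\mathcal{G}(t,s),\mathcal{H}(\eta,s)\le\mathcal{G}(s,s)$, and (C2) on the large ball from $f_{\infty}>\Lambda_{1}$ via the same contradiction argument with $\varphi\equiv1$, choosing $\rho_{2}=\max\{2\rho_{1},\gamma_{1}\theta^{-2}\}$ so that $x(t)\ge\theta^{2}\rho_{2}\ge\gamma_{1}$ on $[\theta,1-\theta]$ (your $\rho_{2}>\max\{\rho_{1},R\theta^{-2}\}$ serves the identical purpose). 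Your calibration remark and the use of the standard form of Lemma~\ref{lem 2.8} are exactly what the paper does.
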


\begin{proof}
We first note that, in virtue of Lemma \ref{lem 3.2}, the operator $\mathcal{A}$ is completely continuous.
Since $ f^{0}<\Lambda_{2} $ and $ f_{\infty}>\Lambda_{1}$, there exist two positive numbers $ \rho_1 >0$ and $\gamma_{1}>0$ such that

\begin{align}
f(t,x)&\leq \Lambda_{2} x,\ \text{for} \ t\in[0,1] \  \text{and} \ 0 <x \leq \rho_{1},\label{eq19}\\
f(t,x)&\geq \Lambda_{1} x ,\ \text{for} \ t\in[0,1] \  \text{and} \ x\geq \gamma_{1}\label{eq20}.
\end{align}
By \eqref{eq19} and Lemma \ref{lem 2.6}, for  $x \in \mathcal{P}\cap \partial\Omega_{\rho_{1}}$, we get
\begin{equation*}
\begin{split}
\|\mathcal{A}x\|&= \max_{t \in [0,1]}\int_{0}^{1}\mathcal{K}(t,s)f(s,x(s))ds\\
&\leq \Lambda_{2} \bigg(1+\frac{\lambda}{2-\lambda{\eta^{2}}}\bigg)\int_{0}^{1}\mathcal{G}(s,s)x(s)ds\\
&\leq \Lambda_{2} \|x\| \bigg(1+\frac{\lambda}{2-\lambda{\eta^{2}}}\bigg)\int_{0}^{1}\mathcal{G}(s,s)ds \\
&\leq \|x\|.
\end{split}
\end{equation*}
Thus the operator $\mathcal{A}$ satisfies the condition \rm{(C1)} in Lemma \ref{lem 2.8}.\\
Now, we show that the operator $\mathcal{A}$ also satisfies the condition \rm{(C2)} in Lemma \ref{lem 2.8}.
Let $\rho_{2}=\max\{2\rho_{1}, \gamma_{1}\theta^{-2}\}$, then by Lemma \ref{lem 2.7}, for $x \in \mathcal{P}\cap \partial\Omega_{\rho_{2}}$, we have
\[x(t)\geq \theta^{2}\rho_{2}\geq \gamma_{1}, \ \text{for}\ t\in[\theta,1-\theta].\]
Hence, by \eqref{eq20}, we have
\[f(t,x(t))\geq \Lambda_{1} x(t) ,\ \text{for} \ t\in[\theta,1-\theta]\ \text{and} \ x \in \mathcal{P}\cap \partial\Omega_{\rho_{2}}.\]
We now choose the function $\varphi\equiv1$, and clearly, $ \varphi\in \mathcal{P}\backslash \{0\}$. We then show that
\[x\neq \mathcal{A}x+\lambda \varphi, \ \text{for}\ x\in \mathcal{P}\cap \partial\Omega_{\rho_{2}} \ \text{and} \ \lambda>0.\]
If the above fact is not true, then there exist a function $x_{0}\in \mathcal{P}\cap \partial\Omega_{\rho_{2}}$ and a positive number $\lambda_{0}$ such that
\[ x_{0}=\mathcal{A}x_{0}+\lambda_{0} \varphi.\]
Then, by Lemma \ref{lem 2.6} and \ref{lem 2.7} , for each $t\in[\theta,1-\theta]$, we have

\begin{equation*}
\begin{split}
x_{0}(t)&= \int_{0}^{1}\mathcal{K}(t,s)f(s,{x}_{0}(s))ds+\lambda_{0} \\
&\geq \theta^{2}\int_{\theta} ^{1-\theta} \mathcal{G}(s,s)\Lambda_{1}{x}_{0}(s)ds +\frac{\lambda{\theta^{2}}}{2-\lambda{\eta^{2}}}\int_{\theta}^{1-\theta}\mathcal{H}(\eta,s)\Lambda_{1}{x}_{0}(s)ds+\lambda_{0}\\
&\geq\|{x}_{0}\|\Lambda_{1}\Bigg(\theta^{4}\int_{\theta}^{1-\theta}\bigg(\mathcal{G}(s,s)
+\frac{\lambda}{2-\lambda\eta^{2}}\mathcal{H}(\eta,s)\bigg)ds\Bigg)+\lambda_{0}\\
&=\|{x}_{0}\|+\lambda_{0}.
\end{split}
\end{equation*}
Thus, $\|{x}_{0}\|\geq \|{x}_{0}\|+\lambda_{0}$. This is a contradiction. Hence the operator $\mathcal{A}$ satisfies the condition \rm{(C2)} in Lemma \ref{lem 2.8}.\\
By Lemma \ref{lem 2.8} and Lemma \ref{lem 3.3}, the operator $\mathcal{A}$ has at least one fixed point $x\in\mathcal{P}\cap (\bar{\Omega}_{\rho_{2}}\setminus \Omega_{\rho_{1}})$, which is a positive solution of the boundary value problem \eqref{eq1} and \eqref{eq2}. The proof is complete.
\end{proof}
From Theorem \ref{thm3.1} and \ref{thm3.2}, we can obtain the following corollary.
\begin{corollary} \label{cor3.1}
Suppose that the hypothesis \rm{(H)} holds. If $f_{0}=\infty$ and $f^{\infty}=0$ or if $f^{0}=0$ and $f_{\infty}=\infty$, then the boundary value problem \eqref{eq1} and \eqref{eq2} has at least one positive solution.
\end{corollary}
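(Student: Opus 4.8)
The plan is to deduce this corollary directly from Theorems \ref{thm3.1} and \ref{thm3.2} by observing that the extreme limiting hypotheses are just the strongest possible instances of the strict inequalities appearing in those theorems. The essential preliminary remark is that the constants $\Lambda_{1}$ and $\Lambda_{2}$ are finite and strictly positive. Indeed, by Lemma \ref{lem 2.6} the kernels $\mathcal{G}(s,s)=(1-s)s^{\alpha-1}$ and $\mathcal{H}(\eta,s)$ are nonnegative and not identically zero on the relevant intervals, and the factor $\lambda/(2-\lambda\eta^{2})$ is nonnegative under hypothesis \textrm{(H)}; hence both integrals defining $\Lambda_{1}^{-1}$ and $\Lambda_{2}^{-1}$ are positive finite numbers, so $\Lambda_{1},\Lambda_{2}\in(0,\infty)$. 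This is what makes the comparisons with $0$ and $\infty$ meaningful.

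For the first case, I would argue as follows. The assumption $f_{0}=\infty$ means $\lim_{x\to 0^{+}}\min_{t\in[0,1]}f(t,x)/x=+\infty$, which in particular exceeds the finite constant $\Lambda_{1}$, so $f_{0}>\Lambda_{1}$. Likewise $f^{\infty}=0$ gives $\lim_{x\to+\infty}\max_{t\in[0,1]}f(t,x)/x=0<\Lambda_{2}/2$, so $f^{\infty}<\Lambda_{2}/2$. Both hypotheses of Theorem \ref{thm3.1} are therefore satisfied, and that theorem yields at least one positive solution of \eqref{eq1} and \eqref{eq2}.

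For the second case, the alternative hypotheses are handled symmetrically: $f^{0}=0$ forces $f^{0}<\Lambda_{2}$, and $f_{\infty}=\infty$ forces $f_{\infty}>\Lambda_{1}$, so the conditions of Theorem \ref{thm3.2} hold and that theorem again produces at least one positive solution. Since one of the two disjunctive hypotheses is assumed to hold, the conclusion follows in either case.

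There is no genuine technical obstacle here, since all the analytic work has already been carried out in Theorems \ref{thm3.1} and \ref{thm3.2}; the corollary is purely a matter of recognizing that $0$ and $\infty$ are admissible boundary values of the strict inequalities. The only point requiring a line of justification, and the one I would state explicitly to keep the argument rigorous, is the finiteness and positivity of $\Lambda_{1}$ and $\Lambda_{2}$, without which the phrases ``$f_{0}=\infty>\Lambda_{1}$'' and ``$f^{\infty}=0<\Lambda_{2}/2$'' would be vacuous. With that remark in place the proof is immediate.
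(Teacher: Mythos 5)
Your proposal is correct and follows exactly the route the paper takes: the paper offers no separate argument for Corollary \ref{cor3.1}, simply noting it is obtained from Theorems \ref{thm3.1} and \ref{thm3.2}, which is precisely your reduction ($f_{0}=\infty \Rightarrow f_{0}>\Lambda_{1}$, $f^{\infty}=0 \Rightarrow f^{\infty}<\Lambda_{2}/2$, and symmetrically for the second case). Your added remark that $\Lambda_{1},\Lambda_{2}\in(0,\infty)$ is a sensible touch of rigor that the paper leaves implicit, but it does not constitute a different approach.
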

\section{Examples}

\begin{example} Consider the following boundary value problem
\begin{equation}\label{eq4.1}
D^{\alpha}x(t) + t+e^{-x}= 0, \ t \in [0,1],
\end{equation}
\begin{equation}\label{eq4.2}
x(0) =0, \ x(1) =2 \int_{0}^{\frac{1}{3}}x(t)dt,
\end{equation}
where $\alpha\in(1,2]$, $\lambda=2$, $\eta=\frac{1}{3}$, and $ f(t,x)=t+e^{-x} \in C( [0, \infty), [0,\infty ))$,
so $ \lambda \eta^{2}=\frac{2}{9} < 2$. \\ We have

\[f_{0} = \lim_{x\rightarrow 0^{+}}\frac{e^{-x}}{x}= \infty, \
f^{\infty}=\lim_{x\rightarrow \infty } \frac{1+e^{-x}}{x}= 0.\]

Thus, by Corollary \ref{cor3.1}, the fractional boundary value problem \eqref{eq4.1}-\eqref{eq4.2} has at least one positive solution.
\end{example}

\begin{example} As a second example we consider the fractional boundary value problem
\begin{equation}\label{eq4.3}
D^{\frac{3}{2}}x(t) + t+\frac{4xe^{2x}/5}{e^{2x}+e^{x}-\frac{999}{500}}= 0, \ t \in [0,1],
\end{equation}
\begin{equation}\label{eq4.4}
x(0) =0, \ x(1) =\frac{8}{5} \int_{0}^{\frac{1}{2}}x(t)dt,
\end{equation}
where $\alpha=\frac{3}{2}$, $\lambda=\frac{8}{5}$, $\eta=\frac{1}{2}$, and $ f(t,x)=t+\frac{4xe^{2x}/5}{e^{2x}+e^{x}-\frac{999}{500}} \in C( [0, \infty), [0,\infty ))$,
so $ \lambda \eta^{2}=\frac{2}{5} < 2$. We have

\begin{align*}
f_{0} &= \lim_{x\rightarrow 0^{+}}\min_{t\in [0,1]} \frac{f(t,x)}{x} = \lim_{x\rightarrow 0^{+}} \frac{4e^{2x}/5}{e^{2x}+e^{x}-\frac{999}{500}} = 400,\\
f^{\infty} &= \lim_{x\rightarrow +\infty} \max_{t\in [0,1]}\frac{f(t,x)}{x} = \lim_{x\rightarrow +\infty } \bigg(\frac{1}{x}+\frac{4e^{2x}/5}{e^{2x}+e^{x}-\frac{999}{500}}\bigg) = \frac{4}{5}.
\end{align*}

By simple calculations, we find that
\[\Lambda_{2}^{-1}=\bigg(1+\frac{\lambda}{2-\lambda\eta^{2}}\bigg)\int_{0}^{1}\mathcal{G}(s,s)ds
=\frac{2}{\alpha(\alpha+1)}=\frac{8}{15}.\]
Hence, we get
\[ \Lambda_{2}=\frac{15}{8}>2f^{\infty}=\frac{8}{5}.\]
In addition, we have
\begin{equation*}
\begin{split}
\Lambda_{1}^{-1}&= \theta^{4}\int_{\theta}^{1-\theta}\bigg(\mathcal{G}(s,s)
+\mathcal{H}\bigg(\frac{1}{2},s\bigg)\bigg)ds \\
&=\theta^{4}\Bigg(\int_{\theta}^{1-\theta}(1-s)s^{\frac{1}{2}}ds+\int_{\theta}^{\frac{1}{2}}\mathcal{H}\bigg(\frac{1}{2},s\bigg)ds
+\int_{\frac{1}{2}}^{1-\theta}\mathcal{H}\bigg(\frac{1}{2},s\bigg)ds\Bigg)\\
&=\theta^{4}\Bigg(\frac{4}{5}\theta^{2}\sqrt{\theta}-\frac{7}{6}\theta\sqrt{\theta}
+\frac{1}{2}(1-\theta)\sqrt{1-\theta}-\frac{2}{5}(1-\theta)^{2}\sqrt{1-\theta}+\frac{1}{2}\sqrt{1-\theta}
-\frac{4}{15\sqrt{2}}\Bigg)\\
&=\frac{\theta^{4}}{30}\Bigg(\big(24\theta-35\big)\theta\sqrt{\theta}+3\big(6+3\theta-4\theta^{2}\big)\sqrt{1-\theta}
-4\sqrt{2}\Bigg).
\end{split}
\end{equation*}
By a Mathematica program, we easily check that $\Lambda_{1}<400=f_{0}$, for all $\theta\in[\frac{19}{50}, \frac{21}{50}]$. Therefore, all conditions of Theorem \ref{thm3.1} are fulfilled. Hence, problem \eqref{eq4.3}-\eqref{eq4.4} has at least one positive solution.
\end{example}



\begin{thebibliography}{00}
\bibitem{Khalil} R. Khalil, M. Al Horani, A. Yousef, and M. Sababheh, \textit{A new
definition of fractional derivative}, Journal of Computational
and Applied Mathematics., \textbf{264} (2014), 65--70.

\bibitem{Al-Rifae} M. Al-Rifae, T. Abdeljawad, \textit{Fundamental results of conformable Sturm–Liouville eigenvalue problems}, Complexity, \textbf{2017} (2017), Article ID 3720471.

\bibitem{Asawasamrit}
S. Asawasamrit, S. K. Ntouyas, P. Thiramanus, J. Tariboon, \textit{Periodic boundary value problems for impulsive conformable fractional integrodifferential equations}, Bound. Value Probl., (122) \textbf{2016} (2016).

\bibitem{Amann} H. Amann, \textit{Fixed point equations and nonlinear eigenvalue problems in ordered Banach spaces}, SIAM. Rev. (4) \textbf{18} (1976), 620--709.

\bibitem{Atangana} A. Atangana, S. C. O. Noutchie, \textit{Model of break-bone fever
via beta-derivatives}, BioMed Research International, \textbf{2014}, (2014),
Article ID 523159, 10 pages.

\bibitem{Bayour}  B. Bayour, D. F. M. Torres, \textit{Existence of solution to a local fractional nonlinear differential equation}, J. Comput. Appl. Math. \textbf{312} (2017), 127--133.

\bibitem{Caputo} M. Caputo, M. Fabrizio, \textit{A new definition of fractional
derivative without singular Kernel}, Progress in Fractional Differentiation
and Applications, (2) \textbf{1} (2015).

\bibitem{Losada} J. Losada, J. J. Nieto, \textit{Properties of a new fractional
derivative without singular kernel}, Progress in Fractional Differentiation
and Applications, (2) \textbf{1} (2015).


\bibitem{Lan} K. Lan, J. R. L. Webb, \textit{Positive solutions of semilinear differential equations with singularities}, J. Differ. Equ., \textbf{148} (1998), 407--421.

\bibitem{Abdeljawad} T. Abdeljawad, \textit{On conformable fractional calculus}, J. Comput. Appl. Math., \textbf{279} (2015), 57--66.

\bibitem{Horani} M. Al Horani, R. Khalil, \textit{Total fractional differentials with applications to exact fractional differential equations}, Int. J. Comput. Math. \textbf{2017} (2017). https://doi.org/10.1080/00207160.2018.1438602.

\bibitem{He}
L. He, X. Dong, Z. Bai, B. Chen, \textit{Solvability of some two-point fractional boundary value
problems under barrier strip conditions}, J. Funct. Spaces \textbf{2017} (2017). Article ID 1465623, 6 pages. https://doi.org/10.1155/2017/1465623.

\bibitem{Song}
Q. Song, X. Dong, Z. Bai, B. Chenb, \textit{Existence for fractional Dirichlet boundary value problem under barrier strip conditions}, J. Nonlinear Sci. Appl., \textbf{10} (2017), 3592--3598.

\bibitem{Yang} S. Yang,L. Wang, S. Zhang, \textit{Conformable derivative: application to non-Darcian flow in low-permeability porous media}, Appl. Math. Lett., \textbf{79} (2018), 105--110.

\bibitem{Zhao} D. Zhao, M, Luo, \textit{General conformable fractional derivative and its physical interpretation}, Calcolo, \textbf{54} (2017), 903--917.

\bibitem{Zhou} H. W. Zhou, S. Yang, S. Q. Zhang, \textit{Conformable derivative approach to anomalous diffusion}, Physica A, \textbf{491} (2018), 1001--1013.

\bibitem{Zhong} W. Zhong, L. Wang, \textit{Positive solutions of conformable
fractional differential equations with integral boundary conditions}, Bound. Value Probl., (137) \textbf{2018} (2018). https://doi.org/10.1186/s13661-018-1056-1

\bibitem{Anderson1} D. R. Anderson, D. J. Ulness, \textit{Newly defined conformable derivatives}, Adv. Dyn. Syst. Appl. \textbf{10} (2015), 109--137.

 \bibitem{Anderson2} D. R. Anderson, D. J. Ulness, \textit{Properties of the Katugampola fractional derivative with potential application in
quantum mechanics}, J. Math. Phys., (6) \textbf{56} (2015), 063502, 18 pp.

\bibitem{Anderson3} D. R. Anderson, R. I. Avery, \textit{Fractional-order boundary value problem with Sturm-Liouville boundary conditions}, e-print. arXiv:1411.5622v1.

 \bibitem{Weberszpil} J. Weberszpil, J. A. Helaël-Neto, \textit{Variational approach and deformed derivatives}, Physica A \textbf{450} (2016), 217--227.

\bibitem{Katugampola} U. N. Katugampola, \textit{A new fractional derivative with classical properties}, e-print. arXiv:1410.6535.

\bibitem{Dong2} X. Dong, Z. Bai, W. Zhang, \textit{Positive solutions for nonlinear eigenvalue problems with conformable fractional
differential derivatives}, J. Shandong Univ. Sci. Tech. Nat. Sci., \textbf{35} (2016) (Chin. Ed.), 85--90.

\bibitem{Dong} X. Dong, Z. Bai, S. Zhang, \textit{Positive solutions to boundary value problems of p-Laplacian with fractional derivative}, Bound. Value Probl., (5) \textbf{2017} (2017).

\bibitem{Deimling} K. Deimling, \textit{Nonlinear Functional Analysis}, Springer, New York, 1985.

\bibitem{Batarfi}
H. Batarfi, J. Losada, J. J. Nieto, W. Shammakh, \textit{Three-point boundary value problems for conformable fractional differential equations}, J. Funct. Spaces \textbf{2015} (2015). Article ID 706383, 6 pages. http://dx.doi.org/10.1155/2015/706383.
\end{thebibliography}
\end{document}